\documentclass[12pt]{article}
\textwidth=7.0in\textheight=8.8in
\oddsidemargin=0in\evensidemargin=\oddsidemargin\topmargin=-.1in

\usepackage{amsfonts,amssymb,amsmath,amsthm}
\usepackage{soul}
\usepackage{graphicx}
\usepackage{epstopdf}
\ifpdf
  \DeclareGraphicsExtensions{.eps,.pdf,.png,.jpg}
\else
  \DeclareGraphicsExtensions{.eps}
\fi
\usepackage{pbox}
\usepackage{color,url}
\usepackage{dsfont}
\usepackage{setspace}
\usepackage{graphicx}
\usepackage{epsfig}
\usepackage{float}
\usepackage{stmaryrd}
\usepackage{pifont}
\usepackage{comment}
\usepackage{enumerate}
\usepackage[section]{placeins}
\usepackage{xcolor}
\usepackage{mathtools, nccmath}

\newtheorem{thm}{\bf{Theorem}}[section]
\newtheorem{lem}[thm]{\bf{Lemma}}
\newtheorem{remark}[thm]{\bf{Remark}}

\newtheorem{df}[thm]{\bf{Definition}}

\newtheorem{ass}[thm]{\emph{Assumption}}

\newtheorem{prop}[thm]{\bf{Proposition}}

\newtheorem{ex}[thm]{\bf{Example}}

\newcommand{\dom}{\operatorname{dom}}

\newcommand{\1}{\operatorname{\mathds{1}}}

\newcommand{\Span}{\operatorname{span}}

\newcommand{\Id}{\operatorname{Id}}

\newcommand{\rank}{\operatorname{rank}}

\newcommand{\Diag}{\operatorname{Diag}}

\newcommand{\Proj}{\operatorname{Proj}}

\newcommand{\R}{\operatorname{\mathbb{R}}}
\newcommand{\N}{\operatorname{\mathbb{N}}}
\newcommand{\X}{\operatorname{\mathcal{X}}}

\newcommand{\y}{\operatorname{\mathcal{Y}}}
\newcommand{\m}{\operatorname{\mathcal{M}}}
\newcommand{\s}{\operatorname{\mathcal{S}}}

\newcommand{\bbm}{\begin{bmatrix}}
\newcommand{\ebm}{\end{bmatrix}}

\newcommand{\ds}{\delta_s}

\newcommand{\dsf}{\delta_s}

\newcommand{\dnsf}{\delta^2_{s}}
\newcommand{\dcf}{\delta_c^2}
\newcommand{\Dcf}{D_c^2}
\newcommand{\Dsf}{D_s^2}

\newcommand{\ns}{\nabla_s}
\newcommand{\sh}{\nabla^2_s}

\newcommand{\h}{\mathbf{H}}
\newcommand{\hba}{\overline{\mathbf{H}}}
\newcommand{\hhat}{\widehat{\mathbf{H}}}
\newcommand{\zero}{\mathbf{0}}
\newcommand{\az}{\alpha_0}
\newcommand{\al}{\alpha}
\newcommand{\abar}{\overline{\alpha}}
\newcommand{\oneh}{\frac{1}{2}}
\newcommand{\Y}{\mathcal{Y}}


\newcommand\scalemath[2]{\scalebox{#1}{\mbox{\ensuremath{\displaystyle #2}}}}

\newcommand{\Projuv}{\Proj_{S, \overline{T}}}
\newcommand{\T}{\overline{T}}
\newcommand{\shess}{\nabla_s^2 f(x^0;S,\T)}

\newcommand{\shessti}{\nabla_s^2 f(x^0;S,\Ti)}

\newcommand{\Ti}{T_{1:m}}

\newcommand{\Projuivi}{\Proj_{S,T_{1:m}}}
\newcommand{\ProjS}{\Proj_S}
\newcommand{\cshtwo}{\nabla^2_{c}}
\newcommand{\cshesstitwo}{\nabla_{c}^2 f(x^0;S,\Ti)}
\newcommand{\cshesstwo}{\nabla_{c}^2 f(x^0;S,\T)}

\begin{document}
\title{A matrix algebra approach to approximate Hessians}
\author{Warren Hare\thanks{Department of Mathematics, University of British Columbia, Okanagan Campus, Kelowna, B.C. V1V 1V7, Canada. Research partially supported by NSERC of Canada Discovery Grant 2018-03865. warren.hare@ubc.ca, ORCID 0000-0002-4240-3903}\and Gabriel Jarry--Bolduc\thanks{Department of Mathematics, University of British Columbia, Okanagan Campus, Kelowna, B.C. V1V 1V7, Canada. Research partially supported by Natural Sciences and Engineering Research Council (NSERC) of Canada Discovery Grant 2018-03865. gabjarry@alumni.ubc.ca }\and Chayne Planiden\thanks{School of Mathematics and Applied Statistics, University of Wollongong, Wollongong, NSW, 2500, Australia. Research supported by University of Wollongong. chayne@uow.edu.au, ORCID 0000-0002-0412-8445}}
\maketitle\author

\begin{abstract}
This work presents a novel matrix-based method for constructing an approximation Hessian using only function evaluations. The method requires less computational power than interpolation-based methods and is easy to implement in matrix-based programming languages such as MATLAB.  As only function evaluations are required, the method is suitable for use in derivative-free algorithms.  

For reasonably structured sample sets, the method is proven to create an order-$1$ accurate approximation of the full Hessian.  Under more specialized structures, the method is proved to yield order-$2$ accuracy.  The undetermined case, where the number of sample points is less than required for full interpolation, is studied and error bounds are developed for the resulting partial Hessians.
\end{abstract}

\noindent{\bf Keywords:} Hessian approximation; derivative-free optimization; order-$N$ accuracy; generalized simplex gradient;generalized simplex Hessian; generalized centered simplex Hessian.   

\noindent{\bf AMS subject classification:} primary 65K10, 90C56. 

\section{Introduction}

Derivative-free optimization (DFO) is the study of finding the minimum value and minimizers of a function without using gradients or higher-order derivative information \cite{audet2017derivative}. DFO is gaining in popularity in recent years and is useful in cases where gradients are unavailable, computationally expensive, or simply difficult to obtain \cite{audet2017derivative,conn2009introduction}.  DFO methods have been used in both smooth \cite{berghen2005condor,cocchi2018implicit,Gratton2020,liuzzi2019trust,powell2003trust,powell2009bobyqa,shashaani2018astro,wild2013global} and nonsmooth \cite{audet2018algorithmic,bagirov2008discrete,hare2013derivative,hare2016proximal,larson2016manifold,MMSMW2017} optimization, and most commonly take the form of either direct-search \cite{amaioua2018efficient, audet2008nonsmooth,audet2018mesh,BBN2018,Gratton2017direct} or  model-based \cite{hare2019derivative,liuzzi2019trust,Maggiar2018,verderio2017construction,wild2013global} methods, while some use a blend of both \cite{AudetIanniLeDigTribes2014,MR2457346}.
Model-based DFO methods use numerical analysis techniques to  approximate gradients and Hessians in a manner that has controllable error bounds. For example, a technique to approximate gradients is the  simplex gradient. This method consists of taking a set of $n+1$ sample points in $\R^n$ that are appropriately spaced, called a simplex, and using them to build a linear interpolation function that approximates the objective function locally, then calculating the gradient of the affine function. The simplex gradient has been shown to have a nicely bounded error, for functions whose gradients are locally Lipschitz continuous \cite{bortz1998simplex}. 

One recent line of research explores methods of improving or generalizing the simplex gradient for its use in DFO \cite{conn2008geometry,conn2008bgeometry,MR3935094,custodio2007,MR4074016,hare2020error,powell2001lagrange,regis2015}. Working from ideas in \cite{conn2008bgeometry},  in \cite{MR4074016} the simplex gradient was generalized so as not to require exactly $n+1$ points; an error-controlled approximation can now be found using any finite number of properly-spaced points. In \cite{hare2020error}, a similar approach was used to examine the generalized centered simplex gradient, an approximation that uses twice as many points, but results in an improvement on the error control from order-$1$ to order-$2$  \cite{hare2020discussion}. 

In a similar vein,  researchers have also explored methods to approximate full Hessians or partial Hessians.  In \cite{custodio2007}, the authors outline an idea for a ``simplex Hessian'' that is constructed via
quadratic interpolation through $(n+1)(n+2)/2$ well-poised sample
points.  They further suggested that if only a portion of the Hessian
were desired (say the diagonal component), then fewer points could be used. These ideas were formalized in \cite{conn2008bgeometry} through quadratic interpolation and analyzed through the use of Lagrange interpolating polynomials. Obtaining an approximation of the diagonal component of a Hessian is also discussed in \cite{coope2021gradient,jarry2022approximating}.

In this work, we continue the development of these tools by introducing the \emph{generalized simplex Hessian (GSH)} and the \emph{generalized centered simplex Hessian (GCSH)}. The GSH is closely linked to the Hessian of a quadratic interpolation function. We will see that in certain situations, both approaches yield the same result. 

 This paper can  be viewed as an extension of the work related to simplex Hessians  introduced in   \cite{conn2009introduction,custodio2007}.   We  introduce an explicit formula based on matrix algebra concepts  to approximate the Hessian of a function $f$.  This formula provides several advantages.  First, the formula provides an approach that is well-defined  regardless  of the number of sample points utilized.  Indeed, as long as the matrices used to build the sample set of points  are nonempty, the GSH and the GCSH are well-defined. In particular, the GSH provides an  explicit formula  to approximate the Hessian even when the quadratic interpolation function  does not exist or is not unique.  However, we note that when the matrices used in the computation of the GSH have a specific structure, the GSH is equivalent to a  \emph{forward-finite-difference approximation} of the Hessian.

The GSH provides an accurate approximation of the Hessian of an objective function $f:\R^n\to\R$ under reasonable assumptions. The technique uses a point of interest $x^0$ and sets of directions, stored in a set of matrices $\{S, T_1,\ldots,T_m\}$ whose columns are vectors that are added to the point of interest to build the sample points.  
Defining $\Delta_S$ as the radius of $S$ and $\Delta_T$ as the maximum radius of the $T_i$, we prove that if $S$ and all $T_i$ are full row rank and the Hessian $\nabla^2f$ exists and is Lipschitz continuous, then the GSH is an accurate estimate of $\nabla^2f$ to within a multiple of $\Delta_u:=\max\{\Delta_S,\Delta_T\}$.  In terms of order-$N$ accuracy \cite{hare2020discussion}, the GSH provides order-$1$ accurate approximations of the full Hessian (see Definition \ref{df:orderNaccuracy} herein). Furthermore, the GCSH provides  order-2 accurate approximations of the full Hessian. 
Error bounds  for the \emph{underdetermined case} (see Definition  \ref{def:cases}) are presented showing that the GSH and  its centered version, the GCSH, are  order-1  and order-2  accurate approximation of the appropriate partial Hessian, respectively.

The error bounds presented  in this paper  share  similarities to those presented in \cite{conn2008geometry,conn2008bgeometry}. In those papers, the authors provide error bounds for  the Hessian  of a fully quadratic model \cite[Theorem 3]{conn2008geometry}, the Hessian of a quadratic regression model \cite[Theorem 3.2]{conn2008bgeometry}, and the Hessian  of a quadratic undetermined model \cite[Theorem 5.12]{conn2008bgeometry}.  

A second benefit of the GSH formula is that by employing a matrix structure in the definition, the Hessian approximation is extremely easy to implement in any matrix-based programming language.  Moreover, after function evaluations are complete, the matrix structure requires less computational effort (in flops) than quadratic interpolation.  Indeed, when $S$ and all $T_i$ are square matrices, then the GSH requires order $O(n^4)$ flops to compute.  This is an improvement to the order  $O(n^6)$ flops  required to find the Hessian of the quadratic interpolation function using a set of sample points that is poised for quadratic interpolation  \cite{conn2008geometry} (see also \cite[\S 6.2]{conn2009introduction}).

The remainder of this paper is organized as follows. Section \ref{sec:prelim} contains a description of notation and some needed definitions, including those of the generalized simplex gradient, the GSH and the GCSH. Section \ref{sec:relation} clarifies the relation between the GSH and the GCSH. Section \ref{sec:errorbounds} presents several error bounds for the GSH and the GCSH. The error bounds are divided into two categories: the situation where all matrices of directions $T_i$ are equal, and the situation where all matrices $T_i$ are not necessarily equal.  For boths situations, the error bound is valid for the underdetermined case. In Section \ref{sec:qishc}, we establish the minimal poised set for GSH computation and show that it is well-poised for quadratic interpolation. Finally, Section \ref{sec:conc} contains concluding remarks and recommends areas of future research in this vein.

\section{Preliminaries}\label{sec:prelim}

Throughout this work, we use the standard notation found in \cite{rockwets}. The domain of a function $f$ is denoted by $\dom f$. The transpose of a matrix $A$ is denoted by $A^\top$. We work in finite-dimensional space $\R^n$ with inner product $x^\top y=\sum_{i=1}^nx_iy_i$ and induced norm $\|x\|=\sqrt{x^\top x}$. The identity matrix in $\R^{n \times n}$ is denoted by $\Id_n$. We use $e^i_n \in \R^n$ for $i \in \{1,  \dots, n\}$ to denote the standard unit basis vectors in $\R^n$, i.e.\ the $i$\textsuperscript{th} column of $\Id_n.$ The matrix $\Diag(v)= \Diag(v_1, \dots, v_n)=:M \in \R^{n \times n},$ where $v \in \R^n,$  represents the diagonal matrix  with  diagonal entries $M_{i,i}=v_i$ for all $i \in \{1, \dots, n\}.$ The zero vector in $\R^n$ is denoted $\zero$ and the zero matrix in $\R^{n \times m}$ is denoted $\zero_{n \times m}$. The entry in the $i$\textsuperscript{th} row and $j$\textsuperscript{th} column of a matrix $A$ is denoted $A_{i,j}.$ 

Given a matrix $A \in \R^{n \times m},$ we use the induced matrix norm 
\begin{align*}
    \Vert A \Vert=\Vert A \Vert_2=\max \{ \Vert Ax\Vert_2 \, : \, \Vert x \Vert_2=1 \}
\end{align*}
and the Frobenius norm 
\begin{align*}
    \Vert A \Vert_F=\left (\sum_{i=1}^n \sum_{j=1}^m A_{i,j}^2 \right )^{\frac{1}{2}}.
\end{align*}
We denote by $B(x^0,\Delta)$ and $\overline{B}(x^0,\Delta)$ the open and closed balls, respectively, centered about $x^0$ with radius $\Delta$.  The \emph{Minkowski sum} of two sets of vectors $A$ and $B$ is denoted by $A \oplus B.$ That is
\begin{align*}
    A \oplus B=\{a+b: a\in A, b \in B\}.
\end{align*}We define a quadratic function $Q: \R^n \to \R$ to be a function of the form $Q(x)=\alpha_0+\alpha^\top x+\frac{1}{2}x^\top \h x$ where $\alpha_0 \in \R, \alpha \in \R^n$ and $\h=\h^\top \in \R^{n \times n}.$ An affine function $\mathcal{L}:\R^n \to \R$ is defined to be any function that can be written in the form $\mathcal{L}(x)=\alpha_0+\alpha^\top x$. Note, affine functions and constant functions $C(x)=\alpha_0$ are also considered quadratic functions, with $\h=\zero_{n \times n}$. 

Next, we introduce fundamental definitions and notation that will be used throughout this paper.
\begin{df}[Poised for quadratic interpolation]\emph{\cite{conn2008geometry,conn2008bgeometry,conn2009introduction}} \label{def:poisedqi}
The set of distinct points $\Y=\{ y^0, y^1, \dots, y^m \} \subset \R^n$ with $m=\frac{1}{2}(n+1)(n+2)-1$ is poised for quadratic interpolation if the system
\begin{align} 
    \alpha_0+\alpha^\top y^i +\frac{1}{2} (y^i)^\top \h y^i=0, \quad i \in \{0, 1, \dots, m\},  \label{eq:poisedqisys}
\end{align}
has a unique solution for $\alpha_0 \in \R, \alpha \in \R^n,$ and $\h=\h^\top \in \R^{n \times n}.$
\end{df}
When a set of sample points $\Y$ is poised for quadratic interpolation, it means that there exists a unique quadratic model passing through all the sample points in $\Y.$ Therefore, the Hessian of the quadratic model defined through 
 $\Y$ is unique. Note that a set of sample points containing $(n+1)(n+2)/2$ distinct sample points is not necessarily poised for quadratic interpolation. The set of sample points is poised for quadratic interpolation if and only the matrix associated to the linear system \eqref{eq:poisedqisys}  is square and full rank. For instance,  the set of sample points $$\left \{\begin{bmatrix}1\\0\end{bmatrix}, \begin{bmatrix}0\\1\end{bmatrix}, \begin{bmatrix}-1\\0\end{bmatrix}, \begin{bmatrix}0\\-1\end{bmatrix}, \begin{bmatrix}\frac{1}{\sqrt{2}}\\ \frac{1}{\sqrt{2}}\end{bmatrix}, \begin{bmatrix}-\frac{1}{\sqrt{2}}\\ -\frac{1}{\sqrt{2}}\end{bmatrix}   \right \} \subset \R^2$$ is not poised for quadratic interpolation since the rank of the matrix associated to the system \eqref{eq:poisedqisys} is $5<6$. The set $$\left \{\begin{bmatrix}0\\0\end{bmatrix}, \begin{bmatrix}1\\0\end{bmatrix}, \begin{bmatrix}0\\1\end{bmatrix}, \begin{bmatrix}-1\\0\end{bmatrix}, \begin{bmatrix}0\\ -1\end{bmatrix}, \begin{bmatrix}\frac{1}{\sqrt{2}}\\ -\frac{1}{\sqrt{2}}\end{bmatrix}   \right \} \subset \R^2$$ is poised for quadratic interpolation  since the rank of the matrix  associated to the  system  \eqref{eq:poisedqisys} is 6. When the number of distinct sample points in $\Y$ is fewer than $(n+1)(n+2)/2,$ then  the quadratic interpolation model is no longer unique. This case is referred as the underdetermined case \cite[Chapter 5]{conn2009introduction}. In this case,  a quadratic model may be defined through the minimum Frobenius norm problem \cite[Chapter 5]{conn2009introduction} (see also \cite[Section 5]{conn2008bgeometry}).  When the number of distinct sample points in $\Y$ is greater than $(n+1)(n+2)/2$ and  there exists no quadratic function that passes through all points in $\Y,$ the quadratic model  may be determined through a least square regression problem \cite[Chapter 4]{conn2009introduction} (see also \cite[Sections 2,3 \& 4]{conn2008bgeometry}). 
\begin{df}[Quadratic interpolation function]\emph{\cite[Definition 9.9]{audet2017derivative}} \label{def:quadinterpolationfunc}
Let $f:\dom f\subseteq \R^n \to \R$ and let $\Y=\{y^0, y^1, \dots, y^m\} \subset \dom f$ with $m=\frac{1}{2}(n+1)(n+2)-1$ be poised for quadratic interpolation. Then the quadratic interpolation function of $f$ over $\Y$ is
\begin{align*}
    Q_f(\y)(x)&=\alpha_0+\alpha^\top x+\frac{1}{2}x^\top \h x,
\end{align*}
where $(\alpha_0, \alpha, \h=\h^\top)$ is the unique solution to
\begin{align*}
    \alpha_0+\alpha^\top y^i+\frac{1}{2}(y^i)^\top \h y^i&= f(y^i), \quad i \in \{0, 1, \dots, m\}.
\end{align*}
\end{df}

In the next definition, we introduce key notation used in the construction of the GSH.  Within, we write a set of vectors in matrix form, by which we mean that each column of the matrix is a vector in the set.

\begin{df}[GSH notation]
Let $f: \dom f \subseteq \R^n \to \R$ and let $x^0 \in \dom f$ be the \emph{point of interest}.  Let
\begin{align*}
    S&=\begin{bmatrix} s^1&s^2&\cdots&s^m\end{bmatrix} \in \R^{n \times m} ~~\mbox{and}\\
    T_i&=\begin{bmatrix} t^1_i&t^2_i&\cdots&t^k_i \end{bmatrix} \in \R^{n \times k_i}, i\in\{1,\ldots,m\}
\end{align*}
be sets of directions contained in $\R^n$, written in matrix form.  Define
    $$\Ti=\{T_1,\ldots,T_m\}.$$
Assume that $x^0 \oplus T_i, x^0+s^i, x^0+s^i \oplus T_i$ are contained in $\dom f$ for all  $i~\in\{1,\ldots,m\}$. Define 
    $$\Delta_S=\max\limits_{i \,\in\{1,\ldots,m\}}\|s^i\|, \quad \Delta_{T_i}=\max\limits_{j \,\in\{1,\ldots,k_i\}}\|t^j_i\|, \quad \Delta_T=\max\limits_{i\in\{1,\ldots,m\}}\Delta_{T_i},$$
 \begin{align} \label{eq:hat}
\widehat{S}=\frac{1}{\Delta_S} S,
 \end{align}
and
\begin{align*}
\dsf\left (x^0;T_i\right )&=\left[\begin{array}{c}f(x^0+t^1_i)-f(x^0)\\f(x^0+t^2_i)-f(x^0) \\\vdots\\f(x^0+t^k_i)-f(x^0)\end{array}\right]\in\R^{k_i}.
\end{align*}
\end{df}

In this paper, all matrices involved are assumed to  have non-null rank. This ensures that the radius of the matrix is nonzero and hence, \eqref{eq:hat} is always well-defined.
Note that in the previous definition, $m\in\N$ and $k_i$ can be any positive integer for all $i \in \{1, \dots, m\}$.

Recall that for nonsquare matrices, a generalization of the matrix inverse is the pseudoinverse. The most well-known type of matrix pseudoinverse is the Moore--Penrose pseudoinverse.

\begin{df}[Moore--Penrose pseudoinverse] \label{def:mpinverse}
Let $A\in\R^{n\times m}$. The \emph{Moore--Penrose pseudoinverse} of $A$, denoted by $A^\dagger$, is the unique matrix in $\R^{m \times n}$ that satisfies the following four equations:
\begin{enumerate}[(i)]
\item $AA^\dagger A=A,$ 
\item $A^\dagger AA^\dagger=A^\dagger,$ \label{eq:mpi2}
\item $(AA^\dagger)^\top=AA^\dagger,$
\item $(A^\dagger A)^\top=A^\dagger A.$
\end{enumerate}
\end{df}

Note that given $A \in \R^{n \times m},$ there exists a unique Moore--Penrose pseudoinverse $A^\dagger\in\R^{m\times n}.$ The following two properties hold.
\begin{enumerate}[(i)]
\item If $A$ has full column rank $m$, then $A^\dagger$ is a left-inverse of $A$, that is $A^\dagger A=\Id_m.$ In this case, 
\begin{equation}
A^\dagger=(A^\top A)^{-1} A^\top. \label{eq:fullrowrank}
\end{equation}
\item If $A$ has full row rank $n$, then $A^\dagger$ is a right-inverse of $A$, that is $AA^\dagger=\Id_n.$ In this case, $A^\dagger=A^\top (A A^\top)^{-1}.$
\end{enumerate}

Before introducing the GSH, it is valuable to recall the definition of the generalized simplex gradient. 

\begin{df}[Generalized simplex gradient] \emph{\cite[Definition 2]{MR4074016}} Let $f:\dom f \subseteq \R^n\to\R$ and let $x^0 \in \dom f$ be the point of interest.  Let $S\in \R^{n \times m}$ with $x^0 \oplus S \subset \dom f.$ The \emph{generalized simplex gradient} of $f$ at $x^0$ over $S$ is denoted by $\ns f(x^0;S)$ and defined by
\begin{equation*}\label{eq:simplex}\ns f(x^0;S)=(S^\top)^\dagger\ds (x^0;S).\end{equation*}
\end{df}
The Moore--Penrose pseudoinverse allows for $S$ to contain any finite number of columns, rather than being restricted to $m=n$. When $S\in\R^{n \times n}$ and $\rank S=n$, then the Moore--Penrose pseudoinverse of $S^\top$ is the inverse of $S^\top$ and we recover the definition of the simplex gradient \cite[Definition 9.5]{audet2017derivative}. Note that the generalized simplex gradient is a generalization of the familiar \emph{forward-finite-difference approximation} of the gradient \cite[Section 4.1]{Burden2016}. Indeed, if $S=H\Id_n,$ where $H=\Diag \begin{bmatrix} h_1&h_2&\dots&h_n\end{bmatrix}, h_i>0,$ then we recover the forward-finite-difference approximation of the gradient.

We now introduce the two key definitions for this paper, the generalized simplex Hessian and the  generalized centered simplex Hessian.   It requires finite sets of directions and a point of interest at which the Hessian is approximated.  Similarly to the generalized simplex gradient, the GSH involves the Moore--Penrose pseudoinverse and a difference matrix.  In this case, the difference matrix  consists of the difference between generalized simplex gradients.

For ease of notation, we use the definition $\Ti=\{T_1,\ldots,T_m\}$ where convenient.

\begin{df}[Generalized simplex Hessian]\label{def:gsh}
Let $f:\dom f \subseteq \R^n \to \R$ and let $x^0 \in \dom f$ be the point of interest.  Let $S=\begin{bmatrix} s^1&s^2&\cdots &s^m \end{bmatrix}  \in \R^{n \times m}$ and $ T_i \in \R^{n \times k_i}$ with $x^0 \oplus T_i,x^0 \oplus S,x^0+s^i \oplus T_i$ contained in  $\dom f$ for all $i \in \{1, \dots, m\}.$ The \emph{ generalized simplex Hessian}  of $f$ at $x^0$ over $S$ and $\Ti$ is denoted by $\sh f(x^0;S,\Ti)$ and defined by
\begin{equation}\sh f(x^0;S,\Ti)=(S^\top)^\dagger\dnsf (x^0;S,\Ti), \label{eq:formulaGSH} \end{equation} 
where
\begin{equation*}
    \dnsf(x^0;S;\Ti)=\left[\begin{array}{c}(\ns f(x^0+s^1;T_1)-\ns f(x^0;T_1))^\top\\(\ns f(x^0+s^2;T_2)-\ns f(x^0;T_2))^\top\\\vdots\\(\ns f(x^0+s^m;T_m)-\ns f(x^0;T_m))^\top\end{array}\right]\in \R^{m \times n}.
\end{equation*}
In the case $T_1=T_2=\cdots=T_m$, we use $\overline{T}:=T_i$ to simplify notation and write $\sh f(x^0;S;\T)$ to emphasize the special case.
\end{df}

We remark that the $s$ in $\sh f, \delta_s$ and $\dnsf$ is for simplex.  This becomes important as we now introduce the generalized centered simplex Hessian.  The `square' in  $\dnsf$ emphasizes the fact that we are taking differences of first-order objects (hence getting a second-order object). 

\begin{df}[Generalized centered simplex Hessian] \label{def:gcsh}
 Let $f:\dom f \subseteq \R^n \to \R$ and let $x^0 \in \dom f$ be the point of interest.  Let $S  \in \R^{n \times m}$ and $ T_i \in \R^{n \times k_i}$ with $x^0 \oplus  S  \oplus  T_i,  x^0 \oplus (-S) \oplus (-T_i),  x^0 \oplus (\pm S),$ and $x^0 \oplus (\pm T_i)$ contained   in  $\dom f$ for all $i \in \{1, \dots, m\}.$  The  generalized centered simplex Hessian  of $f$ at $x^0$ over $S$ and $\Ti$ is denoted by $\cshtwo f(x^0;S,T_{1:m})$ and defined by   
\begin{equation}\label{eq:centered2simplexHessian}
\cshtwo f(x^0;S,T_{1:m})=\frac{1}{2} \left ( \shessti + \nabla_s^2 f(x^0;-S,-\Ti) \right ).
\end{equation}
\end{df}

Note that the $c$ in $\cshtwo f$ is for centered.

The GCSH can be viewed as a generalization of the centered-finite difference approximation of the Hessian, also called midpoint approximation \cite[Section 4.1]{Burden2016}. Similarly, The GSH can be  viewed as a generalization of the forward-finite-difference approximation of the Hessian \cite[Section 4.1]{Burden2016}. Indeed, when $S=T_1=\dots=T_m=H\Id_n,$ where $H=\Diag \begin{bmatrix} h_1&h_2&\dots&h_n \end{bmatrix}, h_i>0,$ then the GSH (GCSH) is equivalent to a forward-finite-difference approximation (centered-finite-difference approximation) of the second-order derivatives. Moreover, the GSH can be viewed as the DFO analog of the Hessian approximation given by Taylor's Theorem in the derivative-based context, which approximates the gradient by differences of true gradients  (see \cite[Section 8.1]{nocedal2006numerical}).  An advantage of the GSH (GCSH) over forward-finite-difference approximations (centered-finite-difference approximations) is that it is less restrictive.  In particular, it is well-defined as long as the matrices $S$ and $T_1,\dots,T_m$ are nonempty.

\begin{remark} \label{rem:remark} Consider the case where $S$ and $T_i$ are square matrices, i.e., $m=n$ and $k_i=n$ for all $i \in \{1, \dots, m\}.$  The computation of the GSH begins by the construction of $n+1$ generalized simplex gradients.   Setting aside function evaluations, computing a simplex gradient is dominated by computing a matrix inverse, which is $O(n^3)$ flops.  Thus the construction of $\dnsf(x^0;S;\Ti)$ requires $O(n^4)$ flops.  The inverse of $S$ is computed ($O(n^3)$ flops) and multiplied by $\dnsf(x^0;S;\Ti)$, $O(n^3)$ flops.  Hence, setting aside function evaluations, computing the GSH requires $O(n^4)$ flops.  

Conversely, setting aside function evaluations, quadratic interpolation requires $O(n^6)$ flops to compute.
\end{remark}

\begin{df}\label{def:cases}
Let $f:\dom f \subseteq \R^n \to \R$ and let $x^0 \in \dom f$ be the point of interest.  Let $S=\begin{bmatrix} s^1&s^2&\cdots &s^m \end{bmatrix}  \in \R^{n \times m}$ and $ T_i \in \R^{n \times k_i}$  with $x^0 \oplus  S  \oplus  T_i,  x^0 \oplus (-S) \oplus (-T_i),  x^0 \oplus (\pm S),$ and $x^0 \oplus (\pm T_i)$ contained   in  $\dom f$ for all $i \in \{1, \dots, m\}.$ Assume that all matrices are non-null rank.
We define  the following  four cases to characterize  the matrix $S \in \R^{n \times m}$ and the set of matrices $\Ti:=\{T_1, \dots, T_m\}.$
 \begin{itemize}
\item Underdetermined: the GSH (GCSH) is said to be $S$-underdetermined if $S$ is non-square and full column rank. 
We say that it is $\Ti$-underdetermined if all matrices in the set $T_{1:m}$ are full column rank and at least one matrix is non-square.
\item Determined: the GSH (GCSH) is said to be $S$-determined if $S$ is square and full rank. It is $\Ti$-derdetermined if all matrices in the set $T_{1:m}$ are square and full rank.
\item Overdetermined:  the GSH (GCSH) is said to be $S$-overdetermined if $S$ is non-square and full row rank. 
It is $\Ti$-overderdetermined if all matrices  in the set $T_{1:m}$ are  full row rank  at least one is  non-square.
\item Nondetermined: the GSH (GCSH) is said to be $S$-nondetermined if it is not in any of the previous three cases.  It is $\Ti$-nonderdetermined if the set $\Ti$ is not in any of the previous three cases.
 \end{itemize}
\end{df}
  Note that the definition of an $S$-underdetermined GSH (GCSH) implies that $\Span S\neq\R^n$, which is true if and only if $SS^\dagger \neq \Id_n$.
Similarly, the definition of a $\Ti$-underdetermined GSH (GCSH) implies that $\Span T_i \neq \R^n \text{ for some } \bar i \in \{1, \dots, m\}$, which is true if and only if $T_iT_i^\dagger \neq \Id_n \text{ for }\bar i$.\par

Defining underdetermined, determined, overdetermined and nondetermined as above creates 16 different cases to classify a GSH (GCSH), all of which are investigated in Section \ref{sec:errorbounds}. 

The following proposition presents a bound for the absolute error between the generalized simplex gradient $\nabla_s f(x^0;S)$ and the true gradient $\nabla f(x^0).$  Note that the assumption that $f \in \mathcal{C}^2$ on $B(x^0, \overline{\Delta})$ implies that $\nabla f$ is Lipschitz continuous on $\overline{B}(x^0, \Delta)$ where $\Delta<\overline{\Delta}.$

We remark that results akin to Proposition \ref{prop:GSGerror} have been
proven several times using a variety of different techniques
\cite{custodio2007,regis2015, MR4074016}.  In \cite{conn2008geometry,conn2008bgeometry} (see also
\cite{conn2009introduction} and \cite[Theorem 3.1]{custodio2007}), simplex gradients were
analyzed using Lagrange interpolating polynomials.  In \cite{regis2015},
(see also \cite{audet2017derivative,MR4074016}), error bounds for simplex gradients were
analyzed using matrix condition numbers. In Proposition
\ref{prop:GSGerror}, we present the error bound in this second format, as further results will examine generalized simplex Hessians in the same style.

\begin{prop}[Error bound for the generalized simplex gradient]\label{prop:GSGerror}
 Let $S=\begin{bmatrix} s^1&s^2&\cdots&s^m\end{bmatrix} \in \R^{n \times m}$  with radius $\Delta_S>0$. Let $\widehat{S}=S/\Delta_S$  and let $f:\dom f \subseteq \R^n\to\R$ be $\mathcal{C}^2$ on $B(x^0,\overline{\Delta})$  where $x^0$ is the point of interest and $\overline{\Delta}>\Delta_S.$ Denote by $L_{\nabla f} \geq 0$ the Lipschitz constant of $\nabla f$ on $\overline{B}(x^0,\Delta_S).$ 
 Then
\begin{align*}
\| \ProjS \nabla_s  f(x^0;S)- \ProjS \nabla f(x^0)\| &= \| \nabla_s  f(x^0;S)- \ProjS \nabla f(x^0) \| \\
&\leq \frac{\sqrt{m}}{2}L_{\nabla f}  \left \Vert (\widehat S^\top)^\dagger \right \Vert \Delta_S,
\end{align*}
where $\ProjS (\cdot)=(S^\top)^\dagger S^\top (\cdot).$
\end{prop}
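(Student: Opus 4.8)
The plan is to reduce the gradient-approximation error to a first-order Taylor remainder estimate applied componentwise across the columns of $S$, then convert the resulting bound on $\delta_s(x^0;S)$ into a bound on $\nabla_s f(x^0;S) = (S^\top)^\dagger \delta_s(x^0;S)$ via the pseudoinverse. First I would establish the equality on the left: since $(S^\top)^\dagger$ has the same range as $S$ (its columns span $\Span S$), we have $\ProjS \nabla_s f(x^0;S) = \nabla_s f(x^0;S)$, because $\nabla_s f(x^0;S) = (S^\top)^\dagger \delta_s(x^0;S) \in \ran (S^\top)^\dagger = \Span S$, and $\ProjS$ fixes that subspace. This handles the first line essentially for free.

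For the inequality, the key observation is that $S^\top \nabla f(x^0)$ is the vector whose $i$\textsuperscript{th} entry is $(s^i)^\top \nabla f(x^0)$, the directional derivative, while the $i$\textsuperscript{th} entry of $\delta_s(x^0;S)$ is $f(x^0+s^i)-f(x^0)$. The Taylor expansion (using $f \in \mathcal{C}^2$, hence $\nabla f$ Lipschitz with constant $L_{\nabla f}$ on $\overline{B}(x^0,\Delta_S)$) gives $|f(x^0+s^i)-f(x^0)-(s^i)^\top \nabla f(x^0)| \leq \tfrac{1}{2}L_{\nabla f}\|s^i\|^2 \leq \tfrac{1}{2}L_{\nabla f}\Delta_S^2$ for each $i$; indeed $f(x^0+s^i)-f(x^0) = \int_0^1 \nabla f(x^0+ts^i)^\top s^i\, dt$, so the difference is $\int_0^1 (\nabla f(x^0+ts^i)-\nabla f(x^0))^\top s^i \, dt$, bounded by $\int_0^1 L_{\nabla f} t\|s^i\|\cdot\|s^i\|\,dt = \tfrac12 L_{\nabla f}\|s^i\|^2$. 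Collecting the $m$ entries, $\|\delta_s(x^0;S) - S^\top \nabla f(x^0)\| \leq \tfrac{\sqrt{m}}{2}L_{\nabla f}\Delta_S^2$.

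Now I would finish by applying $(S^\top)^\dagger$. Using property \eqref{eq:mpi2}, $(S^\top)^\dagger S^\top \nabla f(x^0) = \ProjS \nabla f(x^0)$ (matching the projection written in the statement), so
\begin{align*}
\nabla_s f(x^0;S) - \ProjS \nabla f(x^0) &= (S^\top)^\dagger\left(\delta_s(x^0;S) - S^\top \nabla f(x^0)\right),
\end{align*}
whence $\|\nabla_s f(x^0;S) - \ProjS \nabla f(x^0)\| \leq \|(S^\top)^\dagger\|\cdot\tfrac{\sqrt{m}}{2}L_{\nabla f}\Delta_S^2$. The last step is to rewrite $\|(S^\top)^\dagger\|$ in the normalized form: since $\widehat S = S/\Delta_S$, one has $(\widehat S^\top)^\dagger = \Delta_S (S^\top)^\dagger$ (the pseudoinverse scales inversely under positive scalar multiplication), so $\|(S^\top)^\dagger\| = \Delta_S^{-1}\|(\widehat S^\top)^\dagger\|$, and substituting gives exactly $\tfrac{\sqrt m}{2}L_{\nabla f}\|(\widehat S^\top)^\dagger\|\Delta_S$.

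The only mildly delicate points are bookkeeping rather than conceptual: verifying that $\ran(S^\top)^\dagger = \Span S$ so the left-hand equality holds, and justifying the scaling identity $(\widehat S^\top)^\dagger = \Delta_S(S^\top)^\dagger$ (which follows from uniqueness in Definition \ref{def:mpinverse}, since $\Delta_S(S^\top)^\dagger$ satisfies the four Moore--Penrose equations for $\widehat S^\top = \Delta_S^{-1}S^\top$). I expect no genuine obstacle; the main thing to be careful about is keeping the Lipschitz domain $\overline{B}(x^0,\Delta_S)$ consistent with the requirement $\Delta_S < \overline{\Delta}$ so that the Taylor integral stays inside the region where the Lipschitz bound is valid.
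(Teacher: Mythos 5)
Your proposal is correct, and since the paper states Proposition \ref{prop:GSGerror} without proof (citing the simplex-gradient literature, e.g.\ the matrix-norm-style analyses it references), there is nothing to contrast: your argument --- the componentwise Taylor/integral remainder bound giving $\|\delta_s(x^0;S)-S^\top\nabla f(x^0)\|\le \tfrac{\sqrt m}{2}L_{\nabla f}\Delta_S^2$, followed by applying $(S^\top)^\dagger$ and the scaling identity $(\widehat S^\top)^\dagger=\Delta_S (S^\top)^\dagger$ --- is exactly the standard route used in those cited works and in the paper's own Hessian error bounds. The initial equality is also fine (indeed it follows in one line from Moore--Penrose property (ii), since $\ProjS\nabla_s f(x^0;S)=(S^\top)^\dagger S^\top(S^\top)^\dagger\delta_s(x^0;S)=(S^\top)^\dagger\delta_s(x^0;S)$), so no gaps remain.
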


The error bound presented in Proposition \ref{prop:GSGerror} only requires that  $S$  has non-null rank. Therefore, it covers all possible cases: $S$-underdetermined, $S$-determined case, $S$-overdetermined  and $S$-nondetermined. In Section \ref{sec:errorbounds}, error bounds  with a similar format than the one in Proposition \ref{prop:GSGerror} are developed for the GSH and  the GCSH.  To do this, we next define the projection onto a pair of subspaces.

Given matrices $S \in \R^{n \times m}$ and  $T_i\in \R^{n \times k_i}$, the projection of the matrix $H \in\R^{n \times n}$ onto $S$  and $\Ti$ is denoted by $\Projuivi H$ and defined by
\begin{align*}
\Projuivi H&=\sum_{i=1}^m (S^\top)^\dagger e^i_m (e^i_m)^\top S^\top H T_i T_i^\dagger.
\end{align*}
In the case where $T_1=T_2=\dots=T_m=:\T,$ the projection of $H$ onto $S$ and $\T$ is denoted by $\Projuv H,$ and reduces to 
\begin{align*}
\Projuv H&=\sum_{i=1}^m (S^\top)^\dagger e^i_m (e^i_m)^\top S^\top H \T \T^\dagger \\
&=(S^\top)^\dagger \left ( \sum_{i=1}^m  e^i_m (e^i_m)^\top \right )S^\top H\T\T^\dagger \\
&=(S^\top)^\dagger {\Id}_m S^\top H\T\T^\dagger \\
&=(S^\top)^\dagger S^\top H\T\T^\dagger.
\end{align*}

We conclude this section by  defining  \emph{order-N accuracy} as it used in this paper several times to describe the quality of the Hessian  approximation techniques developed.

\begin{df}[Order-$N$ accuracy] \label{df:orderNaccuracy}
Let $P \in \R^{n \times n}$. Given $f \in \mathcal{C}^2$, $x^0 \in \dom f$ and $\overline{\Delta}>0,$ we say that $\{\tilde f_\Delta\}_{\Delta \in (0,\overline{\Delta}]}$ is a class of models of $f$ at $x^0$  parametrized by $\Delta$ that provides order-N accuracy of $P$ at $x^0$ if there exists a scalar $\kappa(x^0)$ such that, given any $\Delta \in (0, \overline{\Delta}],$ the model $\tilde{f}_\Delta$ satisfies
$$\Vert P-\nabla^2 \tilde{f}_\Delta (x^0)\Vert\leq \kappa(x^0)\Delta^N.$$
\end{df}

When defining error bounds in Section \ref{sec:errorbounds},  the matrix $P$ in the previous definition will be equal to $\Projuivi \nabla^2 f(x^0)$.  We may refer to $\Projuivi \nabla^2 f(x^0)$ as a \emph{partial Hessian}.  Note that when  $S$ is full row rank and $T_i$ is full row rank for all $i,$ then  $\Projuivi \nabla^2 f(x^0)$ is equal to the true Hessian $\nabla^2 f(x^0).$   We will see that we can obtain order-1 accuracy, or order-2 accuracy, of the partial Hessian $\Projuivi \shessti$   under the appropriate assumptions.  First, the relation between the GSH and the GCSH is clarified in the next section.

\section{The relation between the GSH and the GCSH} \label{sec:relation}

In this section, we investigate the relation between the GSH and the GCSH. We begin by providing a formula to compute the GCSH in a format  similar to the formula for the GSH \eqref{eq:formulaGSH}. 

In Lemma \ref{lem:gcshrelation1} we introduce the notation $\dcf$.  This is a centered  version of $\dnsf$; hence, the $s$ for simplex is replaced with $c$ for centered. 

\begin{lem}\label{lem:gcshrelation1}
 Let $f:\dom f \subseteq \R^n \to \R$ and let $x^0 \in \dom f$ be the point of interest.  Let $S=\begin{bmatrix} s^1&\cdots&s^m \end{bmatrix}  \in \R^{n \times m}$ and $ T_i \in \R^{n \times k_i}$ with $x^0 \oplus  S  \oplus  T_i,  x^0 \oplus (-S) \oplus (-T_i), x^0 \oplus (\pm S), x^0 \oplus (\pm T_i)$ contained   in  $\dom f$ for all $i \in \{1, \dots, m\}.$ Then 
 \begin{align*}
\cshesstitwo&=(S^\top)^\dagger \dcf (x^0;S,\Ti)
 \end{align*}
 where
\begin{equation*}
    \dcf(x^0;S,\Ti)=\frac{1}{2}\begin{bmatrix} \left ( \dsf(x^0+s^1;T_1)+\dsf(x^0-s^1;-T_1)-\dsf(x^0;T_1)-\dsf(x^0;-T_1)  \right )^\top T_1^\dagger \\ \vdots \\  \left ( \dsf(x^0+s^m;T_m)+\dsf(x^0-s^m;-T_m)-\dsf(x^0;T_m)-\dsf(x^0;-T_m)  \right )^\top T_m^\dagger\end{bmatrix}.
\end{equation*}
\end{lem}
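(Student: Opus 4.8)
The plan is to unwind both sides of the claimed identity using the definitions of the GCSH, the GSH, and the generalized simplex gradient, and then match them column-block by column-block. Starting from Definition \ref{def:gcsh}, write
\[
\cshtwo f(x^0;S,\Ti)=\tfrac12\bigl(\sh f(x^0;S,\Ti)+\sh f(x^0;-S,-\Ti)\bigr),
\]
and apply the GSH formula \eqref{eq:formulaGSH} to each term. For the first term we get $(S^\top)^\dagger\,\dnsf(x^0;S,\Ti)$; for the second term we need $((-S)^\top)^\dagger=-(S^\top)^\dagger$ (since $(-A)^\dagger=-A^\dagger$ for any matrix $A$), so that $\sh f(x^0;-S,-\Ti)=-(S^\top)^\dagger\,\dnsf(x^0;-S,-\Ti)$. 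Hence the whole expression collapses to $(S^\top)^\dagger\cdot\tfrac12\bigl(\dnsf(x^0;S,\Ti)-\dnsf(x^0;-S,-\Ti)\bigr)$, and it remains to identify the bracketed matrix with $\dcf(x^0;S,\Ti)$.

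Next I would examine a single row of that bracketed $m\times n$ matrix. The $i$-th row of $\dnsf(x^0;S,\Ti)$ is $\bigl(\ns f(x^0+s^i;T_i)-\ns f(x^0;T_i)\bigr)^\top$, and the $i$-th row of $\dnsf(x^0;-S,-\Ti)$ is $\bigl(\ns f(x^0-s^i;-T_i)-\ns f(x^0;-T_i)\bigr)^\top$. Now use the definition of the generalized simplex gradient, $\ns f(x;T_i)=(T_i^\top)^\dagger\,\dsf(x;T_i)$, to rewrite each generalized simplex gradient as a pseudoinverse times a difference vector. The key small observation here is that $((-T_i)^\top)^\dagger=-(T_i^\top)^\dagger$, so $\ns f(x;-T_i)=-(T_i^\top)^\dagger\,\dsf(x;-T_i)$. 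Substituting everything in, the $i$-th row of the target matrix becomes
\[
\tfrac12\Bigl((T_i^\top)^\dagger\bigl(\dsf(x^0+s^i;T_i)-\dsf(x^0;T_i)\bigr)+(T_i^\top)^\dagger\bigl(\dsf(x^0-s^i;-T_i)-\dsf(x^0;-T_i)\bigr)\Bigr)^\top,
\]
and taking the transpose inside, using $\bigl((T_i^\top)^\dagger v\bigr)^\top=v^\top\bigl((T_i^\top)^\dagger\bigr)^\top=v^\top T_i^\dagger$ (since $\bigl((T_i^\top)^\dagger\bigr)^\top=(T_i^\dagger)^{\top\top}=$ well, more precisely $(A^\dagger)^\top=(A^\top)^\dagger$, so $\bigl((T_i^\top)^\dagger\bigr)^\top=(T_i^\dagger)$), yields exactly the $i$-th row of $\dcf(x^0;S,\Ti)$ as displayed in the statement. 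Assembling the rows completes the identification.

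I do not expect any genuine obstacle: the proof is a bookkeeping exercise in the algebraic identities $(-A)^\dagger=-A^\dagger$ and $(A^\dagger)^\top=(A^\top)^\dagger$ together with the definitions. The only point requiring a little care is the transpose juggling when moving the pseudoinverse factor $(T_i^\top)^\dagger$ from the left of a column vector to the right of a row vector — i.e., verifying that $\bigl((T_i^\top)^\dagger\,w\bigr)^\top = w^\top T_i^\dagger$ — and making sure the factor of $\tfrac12$ from the GCSH definition lands in the right place to reproduce the $\tfrac12$ in front of $\dcf$. A secondary minor point is confirming that the domain hypotheses stated in the lemma are exactly those needed so that every function evaluation appearing in $\dsf(x^0\pm s^i;\pm T_i)$ and $\dsf(x^0;\pm T_i)$ is well-defined, which is immediate from the assumption that $x^0\oplus S\oplus T_i$, $x^0\oplus(-S)\oplus(-T_i)$, $x^0\oplus(\pm S)$, and $x^0\oplus(\pm T_i)$ all lie in $\dom f$.
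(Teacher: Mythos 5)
Your proposal is correct and follows essentially the same route as the paper's proof: expand the GCSH via its definition, pull out $(S^\top)^\dagger$ using $((-S)^\top)^\dagger=-(S^\top)^\dagger$, then rewrite each row's generalized simplex gradients as $(T_i^\top)^\dagger$ times difference vectors, using $(-T_i)^\dagger=-T_i^\dagger$ and $((T_i^\top)^\dagger)^\top=T_i^\dagger$ to land on $\dcf(x^0;S,\Ti)$. The transpose and sign bookkeeping you flag is exactly what the paper's displayed chain of equalities carries out.
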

\begin{proof}
We have
\begin{align*}
&\cshesstitwo \\
=&\frac{1}{2} \left ( \shessti +\nabla_s^2 f(x^0;-S,-T_{1:m}) \right ) \\
=&\frac{1}{2} (S^\top)^\dagger\left (  \dnsf (x^0;S,T_{1;m})- \dnsf (x^0;-S,-T_{1:m})  \right ) \\
=&(S^\top)^\dagger \frac{1}{2} \begin{bmatrix} \left ( \nabla_s f(x^0+s^1;T_1)-\nabla_s f(x^0;T_1) \right )^\top- \left ( \nabla_s f(x^0-s^1;-T_1)-\nabla_s f(x^0;-T_1) \right )^\top \\ \vdots \\ \left ( \nabla_s f(x^0+s^m;T_m)-\nabla_s f(x^0;T_m) \right )^\top- \left ( \nabla_s f(x^0-s^m;-T_m)-\nabla_s f(x^0;-T_m) \right )^\top \end{bmatrix} \\
=&(S^\top)^\dagger \frac{1}{2} \begin{bmatrix} \left ( \dsf(x^0+s^1;T_1)-\dsf(x^0;T_1)\right )^\top T_1^\dagger -\left ( \left (\dsf(x^0-s^1;-T_1)-\dsf(x^0;-T_1) \right )^\top (-T_1)^\dagger\right ) \\ \vdots \\ \left ( \dsf(x^0+s^m;T_m)-\dsf(x^0;T_m)\right )^\top T_m^\dagger -\left ( \left (\dsf(x^0-s^m;-T_m)-\dsf(x^0;-T_m) \right )^\top (-T_m)^\dagger\right ) \end{bmatrix} \\
=&(S^\top)^\dagger \frac{1}{2}\begin{bmatrix} \left ( \dsf(x^0+s^1;T_1)+\dsf(x^0-s^1;-T_1)-\dsf(x^0;T_1)-\dsf(x^0;-T_1)  \right )^\top T_1^\dagger \\ \vdots \\  \left ( \dsf(x^0+s^m;T_m)+\dsf(x^0-s^m;-T_m)-\dsf(x^0;T_m)-\dsf(x^0;-T_m)  \right )^\top T_m^\dagger\end{bmatrix} \\
=&(S^\top)^\dagger \dcf(x^0;S,T_{1:m}).
\end{align*} 
\end{proof}
Notice that
\begin{align}
&\left [\dsf(x^0+s^i;T_i)+\dsf(x^0-s^i;-T_i)-\dsf(x^0;T_i)-\dsf(x^0;-T_i) \right ]_{j_i}\nonumber\\\nonumber
=&\frac12\big(f(x^0+s^i+t_i^{j_i})-f(x^0+s^i)+f(x^0-s^i-t_i^{j_i})-f(x^0-s^i)\\&-f(x^0+t_i^{j_i})+f(x^0)-f(x^0-t_i^{j_i})+f(x^0)\big) \notag \\
=&\frac12\big(f(x^0+s^i+t_i^{j_i})+f(x^0-s^i-t_i^{j_i})\nonumber\\
&-f(x^0+s^i)-f(x^0-s^i)-f(x^0+t_i^{j_i})-f(x^0-t_i^{j_i})+2f(x^0)\big)\label{eq:centered2functionvalues}
\end{align}
for $j_i \in \{1, \dots, k_i\}$ and $i \in \{1, \dots, m\}.$ We will use \eqref{eq:centered2functionvalues} in Theorem \ref{thm:ebcentered2}. 

Next, we show that the GCSH is actually a special case of the GSH where the matrices of directions 
have a specific form. This result is similar  to the result in  \cite[Proposition 2.10]{hare2020error}, which showed that the generalized centered gradient is equal to the generalized simplex gradient when the matrix of directions used  to compute the generalized simplex gradient has the form $A=\begin{bmatrix} S&-S\end{bmatrix}$ for some matrix $S \in\R^{n \times m}. $  We shall use the following lemma.

\begin{lem} \label{lem:moorepenroseA}
Let $A=\begin{bmatrix} S&-S\end{bmatrix} \in \R^{n \times 2m}$  for some matrix $S \in \R^{n \times m}.$ Then 
$$A^\dagger=\frac{1}{2} \begin{bmatrix} S^\dagger\\-S^\dagger \end{bmatrix}.$$
\end{lem}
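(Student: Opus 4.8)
The plan is to verify directly that the matrix $B := \tfrac{1}{2}\begin{bmatrix} S^\dagger\\ -S^\dagger\end{bmatrix} \in \R^{2m \times n}$ satisfies the four defining equations of the Moore--Penrose pseudoinverse from Definition \ref{def:mpinverse}; by uniqueness this forces $A^\dagger = B$. The key computational observation, used repeatedly, is the block identity
\begin{align*}
AB &= \begin{bmatrix} S & -S\end{bmatrix}\cdot \tfrac12\begin{bmatrix} S^\dagger\\ -S^\dagger\end{bmatrix} = \tfrac12\left(SS^\dagger + SS^\dagger\right) = SS^\dagger,
\end{align*}
and, similarly,
\begin{align*}
BA &= \tfrac12\begin{bmatrix} S^\dagger\\ -S^\dagger\end{bmatrix}\begin{bmatrix} S & -S\end{bmatrix} = \tfrac12\begin{bmatrix} S^\dagger S & -S^\dagger S\\ -S^\dagger S & S^\dagger S\end{bmatrix}.
\end{align*}
So $AB$ collapses to the single familiar idempotent $SS^\dagger$, while $BA$ is a $2\times 2$ block matrix each of whose blocks is $\pm S^\dagger S$.

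With these two formulas in hand, each of the four conditions reduces to a known property of $S^\dagger$. For condition (i), $ABA = SS^\dagger A = SS^\dagger \begin{bmatrix} S & -S\end{bmatrix} = \begin{bmatrix} SS^\dagger S & -SS^\dagger S\end{bmatrix} = \begin{bmatrix} S & -S\end{bmatrix} = A$, using $SS^\dagger S = S$. For condition (ii), $BAB = B(SS^\dagger) = \tfrac12\begin{bmatrix} S^\dagger S S^\dagger\\ -S^\dagger S S^\dagger\end{bmatrix} = \tfrac12\begin{bmatrix} S^\dagger\\ -S^\dagger\end{bmatrix} = B$, using $S^\dagger S S^\dagger = S^\dagger$. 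Condition (iii) is immediate since $AB = SS^\dagger$ is symmetric by the corresponding Moore--Penrose property of $S$. For condition (iv), $BA = \tfrac12\begin{bmatrix} S^\dagger S & -S^\dagger S\\ -S^\dagger S & S^\dagger S\end{bmatrix}$ is symmetric because $S^\dagger S$ is symmetric (again a Moore--Penrose property of $S$) and the block pattern $\begin{bmatrix} X & -X\\ -X & X\end{bmatrix}$ is symmetric whenever $X$ is.

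Having checked all four equations, I would invoke the uniqueness clause of Definition \ref{def:mpinverse} to conclude $A^\dagger = B$, completing the proof. There is no real obstacle here — the only thing to be careful about is bookkeeping with the block multiplications and making sure each step cites the correct one of the four Moore--Penrose identities for $S$ (namely $SS^\dagger S = S$, $S^\dagger S S^\dagger = S^\dagger$, $(SS^\dagger)^\top = SS^\dagger$, $(S^\dagger S)^\top = S^\dagger S$). I would present the four verifications compactly in a single aligned display or as four short labelled lines, since none individually requires more than one line of algebra.
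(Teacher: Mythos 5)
Your verification is correct and is exactly the argument the paper has in mind: the paper's proof simply states ``this follows from basic algebra,'' and your check of the four Moore--Penrose conditions (via $AB = SS^\dagger$ and the block form of $BA$) together with uniqueness is that basic algebra spelled out. No gaps; nothing further is needed.
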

\begin{proof} This follows from basic algebra.
\end{proof}

\begin{prop} \label{prop:relationGCSHGSH}
Let $f: \dom f \subseteq \R^n \to \R$ and let $x^0 \in \dom f$ be the point of interest. Let $A=\begin{bmatrix} a^1&a^2&\cdots&a^{2m} \end{bmatrix}= \begin{bmatrix} S&-S\end{bmatrix} \in \R^{n \times 2m}$ for some $S=\begin{bmatrix} s^1&\cdots&s^m \end{bmatrix} \in \R^{n \times m}$ and let  $B_i=T_i \in \R^{n \times k_i},$ and $B_{m+i}=-T_i$  for all $i \in \{1, \dots,m \}.$ Suppose that  $x^0 \oplus  S  \oplus  T_i,  x^0 \oplus (-S) \oplus (-T_i), x^0 \oplus (\pm S), x^0 \oplus (\pm T_i)$ are contained   in  $\dom f$ for all $i \in \{1, \dots, m\}.$ Then $$\nabla_s^2 f(x^0;A,B_{1:2m})=\cshesstitwo.$$
\end{prop}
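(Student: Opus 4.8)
The plan is to expand $\nabla_s^2 f(x^0;A,B_{1:2m})$ directly from Definition \ref{def:gsh} and read off the block structure forced by $A=[S\ {-S}]$ together with the pairing $B_i=T_i$, $B_{m+i}=-T_i$. First I would check that all objects are well defined: the hypotheses $x^0\oplus(\pm S)$, $x^0\oplus(\pm T_i)$, $x^0\oplus S\oplus T_i$ and $x^0\oplus(-S)\oplus(-T_i)$ in $\dom f$ give $x^0\oplus A\subset\dom f$, $x^0\oplus B_j\subset\dom f$ and $(x^0+a^j)\oplus B_j\subset\dom f$ for every $j\in\{1,\dots,2m\}$, which is what Definition \ref{def:gsh} requires for the left-hand side; those same four conditions are exactly the domain requirements of Definition \ref{def:gcsh}, so the right-hand side is defined as well.

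Next I would unpack $\dnsf(x^0;A,B_{1:2m})\in\R^{2m\times n}$ row by row. For an index $i\le m$ the $i$-th direction is $a^i=s^i$ and $B_i=T_i$, so the $i$-th row is $(\nabla_s f(x^0+s^i;T_i)-\nabla_s f(x^0;T_i))^\top$; for the index $m+i$ the direction is $a^{m+i}=-s^i$ and $B_{m+i}=-T_i$, so the $(m+i)$-th row is $(\nabla_s f(x^0-s^i;-T_i)-\nabla_s f(x^0;-T_i))^\top$. Thus $\dnsf(x^0;A,B_{1:2m})$ is the vertical stacking of $\dnsf(x^0;S,T_{1:m})$ above $\dnsf(x^0;-S,-T_{1:m})$. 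Then I would compute $(A^\top)^\dagger$: Lemma \ref{lem:moorepenroseA} gives $A^\dagger=\tfrac12\begin{bmatrix}S^\dagger\\ -S^\dagger\end{bmatrix}$, so using the standard identities $(A^\top)^\dagger=(A^\dagger)^\top$ and $(S^\dagger)^\top=(S^\top)^\dagger$ (or a direct check of the four Moore--Penrose equations, as in the proof of Lemma \ref{lem:moorepenroseA}) one gets $(A^\top)^\dagger=\tfrac12\begin{bmatrix}(S^\top)^\dagger & -(S^\top)^\dagger\end{bmatrix}$. Multiplying this block row against the block column and invoking \eqref{eq:formulaGSH} yields $\nabla_s^2 f(x^0;A,B_{1:2m})=\tfrac12(S^\top)^\dagger\bigl(\dnsf(x^0;S,T_{1:m})-\dnsf(x^0;-S,-T_{1:m})\bigr)$; this is exactly the expression the proof of Lemma \ref{lem:gcshrelation1} identifies with $\cshesstitwo$ (equivalently, distribute $(S^\top)^\dagger$ and use $((-S)^\top)^\dagger=-(S^\top)^\dagger$ to recognize $\tfrac12\bigl(\shessti+\nabla_s^2 f(x^0;-S,-T_{1:m})\bigr)$, which is $\cshesstitwo$ by Definition \ref{def:gcsh}). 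This completes the proof.

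The argument is essentially bookkeeping, so there is no deep obstacle. The one point needing a little care is the identity $(A^\top)^\dagger=\tfrac12\begin{bmatrix}(S^\top)^\dagger & -(S^\top)^\dagger\end{bmatrix}$: Lemma \ref{lem:moorepenroseA} is stated for $A=[S\ {-S}]$ rather than its transpose, so one must either transpose it via $(\cdot^\top)^\dagger=(\cdot^\dagger)^\top$ or redo the four-equation verification, and one must keep the factor $\tfrac12$ coming from $A^\dagger$ — this is exactly the factor that makes the GCSH the \emph{average} of the GSH at $(S,T_{1:m})$ and at $(-S,-T_{1:m})$ rather than their sum.
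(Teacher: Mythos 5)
Your proposal is correct and follows essentially the same route as the paper's own proof: unpack the rows of $\dnsf(x^0;A,B_{1:2m})$ using $a^i=s^i$, $a^{m+i}=-s^i$, $B_i=T_i$, $B_{m+i}=-T_i$, apply the block pseudoinverse formula $(A^\top)^\dagger=\tfrac12\begin{bmatrix}(S^\top)^\dagger & -(S^\top)^\dagger\end{bmatrix}$ obtained from Lemma \ref{lem:moorepenroseA} via transposition, and recognize $\tfrac12\bigl(\shessti+\nabla_s^2 f(x^0;-S,-T_{1:m})\bigr)=\cshesstitwo$ from Definition \ref{def:gcsh}. The only cosmetic difference is that you pass through the intermediate expression $\tfrac12(S^\top)^\dagger\bigl(\dnsf(x^0;S,T_{1:m})-\dnsf(x^0;-S,-T_{1:m})\bigr)$ from the proof of Lemma \ref{lem:gcshrelation1}, while the paper splits the block product directly into the two half-sums; these are the same computation.
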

\begin{proof}
We have \small
\begin{align*}
    &\nabla_s^2 f(x^0;A,B_{1:2m}) \\
    &=\begin{bmatrix} S^\top\\-S^\top\end{bmatrix}^\dagger \begin{bmatrix} \left (\nabla_s f(x^0+a^1;B_1) -\nabla_s f(x^0;B_1) \right )^\top \\ \vdots \\  \left (\nabla_s f(x^0+a^{2m};B_{2m}) -\nabla_s f(x^0;B_{2m}) \right )^\top \end{bmatrix} \\
    &=\frac{1}{2}\begin{bmatrix} (S^\top)^\dagger&-(S^\top)^\dagger \end{bmatrix} \begin{bmatrix} \left ( \nabla_s f(x^0+a^1;B_1) -\nabla_s f(x^0;B_1) \right )^\top \\ \vdots \\ \left ( \nabla_s f(x^0+a^{2m};B_{2m}) -\nabla_s f(x^0;B_{2m}) \right )^\top \end{bmatrix} \\
    &=\frac{1}{2}(S^\top)^\dagger \begin{bmatrix} \left (\nabla_s f(x^0+s^1;T_1)-\nabla_s f(x^0;T_1) \right )^\top \\ \vdots \\ \left (\nabla_s f(x^0+s^m;T_m)-\nabla_s f(x^0;T_m) \right )^\top \end{bmatrix} +\frac{1}{2}(-S^\top)^\dagger \begin{bmatrix} \left (\nabla_s f(x^0-s^1;-T_1)-\nabla_s f(x^0;-T_1) \right )^\top \\ \vdots \\ \left (\nabla_s f(x^0-s^m;-T_m)-\nabla_s f(x^0;-T_m) \right )^\top \end{bmatrix} \\
   &=\frac{1}{2} \left ( \shessti +\nabla_s^2 f(x^0;-S,-T_{1:m}) \right ) \\ 
   &=\cshesstitwo.
\end{align*}\normalsize
\end{proof}

We are now ready to examine the error bounds for the GSH and the GCSH.

\section{Error bounds}\label{sec:errorbounds}

In this section, we provide error bounds for the GSH and the GCSH. These error bounds are separated in two categories: the general case where all matrices $T_i$ are not necessarily equal, and the special case where all matrices $T_i=:\T$ are equal.   The error bounds for the GSH show an order-1 accurate approximation of the full Hessian $\nabla^2 f(x^0)$, or an order-1 accurate approximation of the partial Hessian $\Projuivi \nabla^2 f(x^0)$ if the  GSH is $S$- or  $\Ti$-underdetermined.  Similarly, the error bounds for the GCSH show an order-2 accurate approximation of the full Hessian $\nabla^2 f(x^0)$, or an order-2 accurate approximation of the partial Hessian $\Projuivi \nabla^2 f(x^0)$ if the  GCSH is $S$- or $\Ti$-underdetermined.

Before introducing  the error bounds, we note  that in certain situations, the projection  of  the GSH (GCSH) onto $S$ and $T_{1:m}$  is equal to the GSH (GCSH). 
 \begin{prop} \label{prop:projectiondoesnotaffect}
Let $f: \dom f \subseteq \R^n \to \R$ and $x^0 \in \dom f.$ Let  $S=\begin{bmatrix} s^1&\cdots&s^m \end{bmatrix} \in \R^{n \times m}$ and $T_i \in \R^{n \times k_i}.$  Assume that $x^0 \oplus (\pm S), x^0 \oplus (\pm T_i), x^0 \oplus S \oplus T_i$ and $x^0 \oplus -S \oplus -T_i$ are contained in $\dom  f$ for all $i.$ Then the following hold.
\begin{enumerate}[(i)]
\item If $S$ is full column rank {\bf or}  $T_i$ is full row rank for all $i \in \{1, \dots, m\}$, then 
\begin{align}\label{eq:partialdiag}
\Projuivi \shessti&=\shessti
\end{align}
and
\begin{align} \label{eq:projectionAsymmetric}
\Projuivi \nabla_c^2 f(x^0;S,T_{1:m})=\nabla_c^2 f(x^0;S,T_{1:m}).
\end{align}
\item If $T_1=T_2=\dots=T_m=:\T,$ then 
\begin{align} \label{eq:allTequal}
\Projuv \shess&=\shess
\end{align}
and
\begin{align}\label{eq:AsymmetricTallthesame}
\Projuv \nabla_c^2 f(x^0;S,\T)&=\nabla_c^2 f(x^0;S,\T).
\end{align}
\end{enumerate}
\end{prop}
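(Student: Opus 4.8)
The plan is to reduce both claims to the explicit formulas $\shessti=(S^\top)^\dagger\dnsf(x^0;S,\Ti)$ (Definition \ref{def:gsh}) and $\nabla_c^2 f(x^0;S,\Ti)=(S^\top)^\dagger\dcf(x^0;S,\Ti)$ (Lemma \ref{lem:gcshrelation1}), together with one structural observation: the $i$-th row of $\dnsf(x^0;S,\Ti)$, and likewise of $\dcf(x^0;S,\Ti)$, has the form $w_i^\top T_i^\dagger$ for some vector $w_i$. For $\dnsf$ this follows from $\ns f(x^0+s^i;T_i)-\ns f(x^0;T_i)=(T_i^\top)^\dagger\big(\ds(x^0+s^i;T_i)-\ds(x^0;T_i)\big)$ after transposing and using $\big((T_i^\top)^\dagger\big)^\top=T_i^\dagger$; for $\dcf$ it is already displayed in Lemma \ref{lem:gcshrelation1}. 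Since $T_i^\dagger=T_i^\dagger T_iT_i^\dagger$ (property (ii) of the Moore--Penrose pseudoinverse, Definition \ref{def:mpinverse}), each such row is unchanged under right multiplication by $T_iT_i^\dagger$. Beyond this I will only use $\sum_{i=1}^m e^i_m(e^i_m)^\top=\Id_m$ and $(S^\top)^\dagger S^\top(S^\top)^\dagger=(S^\top)^\dagger$. Because the manipulations for the GSH and the GCSH are identical once both objects are written as $(S^\top)^\dagger(\cdot)$, I would carry out the GSH case and note that \eqref{eq:projectionAsymmetric} and \eqref{eq:AsymmetricTallthesame} follow verbatim.

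For part (i) I split on the hypothesis. If every $T_i$ is full row rank, then $T_iT_i^\dagger=\Id_n$ for all $i$, so
\[
\Projuivi\shessti=\sum_{i=1}^m(S^\top)^\dagger e^i_m(e^i_m)^\top S^\top\shessti=(S^\top)^\dagger S^\top\shessti,
\]
and substituting $\shessti=(S^\top)^\dagger\dnsf(x^0;S,\Ti)$ and applying $(S^\top)^\dagger S^\top(S^\top)^\dagger=(S^\top)^\dagger$ collapses the right-hand side to $\shessti$, which is \eqref{eq:partialdiag}. If instead $S$ is full column rank, then $S^\top$ has full row rank and $S^\top(S^\top)^\dagger=\Id_m$; hence $S^\top\shessti=\dnsf(x^0;S,\Ti)$, so $(e^i_m)^\top S^\top\shessti$ is exactly the $i$-th row of $\dnsf(x^0;S,\Ti)$, which by the structural observation is fixed on the right by $T_iT_i^\dagger$. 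Therefore
\[
\Projuivi\shessti=\sum_{i=1}^m(S^\top)^\dagger e^i_m(e^i_m)^\top\dnsf(x^0;S,\Ti)=(S^\top)^\dagger\dnsf(x^0;S,\Ti)=\shessti,
\]
and replacing $\dnsf$ by $\dcf$ throughout gives \eqref{eq:projectionAsymmetric}.

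For part (ii), when $T_1=\cdots=T_m=\T$ the matrix $\dnsf(x^0;S,\T)$ factors as $E\,\T^\dagger$, where $E$ has $i$-th row $(\ds(x^0+s^i;\T)-\ds(x^0;\T))^\top$; hence $\dnsf(x^0;S,\T)\,\T\T^\dagger=E\,\T^\dagger\T\T^\dagger=E\,\T^\dagger=\dnsf(x^0;S,\T)$. Using the reduced projection formula $\Projuv H=(S^\top)^\dagger S^\top H\,\T\T^\dagger$ established in Section \ref{sec:prelim}, I would then compute
\[
\Projuv\shess=(S^\top)^\dagger S^\top(S^\top)^\dagger\dnsf(x^0;S,\T)\,\T\T^\dagger=(S^\top)^\dagger\dnsf(x^0;S,\T)\,\T\T^\dagger=(S^\top)^\dagger\dnsf(x^0;S,\T)=\shess,
\]
which is \eqref{eq:allTequal}; note that no rank assumption on $S$ or $\T$ is needed here, and \eqref{eq:AsymmetricTallthesame} is the identical argument with $\dcf(x^0;S,\T)=\tfrac{1}{2}F\,\T^\dagger$. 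The only step requiring real care is the first paragraph's bookkeeping: correctly transposing the pseudoinverse expressions so that the ``difference of generalized simplex gradients'' rows are recognized as lying in the row space of $T_i$, which is precisely what makes the right factor $T_iT_i^\dagger$ act as the identity. Once that is in place, everything reduces to $\sum_i e^i_m(e^i_m)^\top=\Id_m$ and $A^\dagger AA^\dagger=A^\dagger$.
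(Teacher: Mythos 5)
Your proposal is correct and follows essentially the same route as the paper's proof: the same case split ($S$ full column rank giving $S^\top(S^\top)^\dagger=\Id_m$, versus $T_iT_i^\dagger=\Id_n$ when each $T_i$ is full row rank), the same key facts $A^\dagger AA^\dagger=A^\dagger$ and $\sum_{i=1}^m e^i_m(e^i_m)^\top=\Id_m$, and the same structural observation that each row of $\dnsf$ (and $\dcf$) ends in $T_i^\dagger$ so right multiplication by $T_iT_i^\dagger$ leaves it unchanged. The GCSH identities and the $\T$-equal case are handled exactly as in the paper, by the analogous substitution.
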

\begin{proof}
Suppose $S$ is full column rank. Since $T_i^\dagger T_i T_i^\dagger=T_i^\dagger$ by Property \eqref{eq:mpi2} of the Moore--Penrose pseudoinverse, we obtain
\begin{align*}
\Projuivi \shessti&=\sum_{i=1}^m (S^\top)^\dagger e^i_m (e^i_m)^\top S^\top \shessti T_i T_i^\dagger \\
&=\sum_{i=1}^m (S^\top)^\dagger e^i_m (e^i_m)^\top S^\top (S^\top)^\dagger \dnsf(x^0;S,T_{1:m}) T_iT_i^\dagger \\
&=(S^\top)^\dagger \sum_{i=1}^m  \Diag(e^i_m)  \dnsf(x^0;S,T_{1:m}) T_iT_i^\dagger \\
&=(S^\top)^\dagger \dnsf(x^0;S,\Ti)\\
&=\shessti.
\end{align*}
Equation \eqref{eq:projectionAsymmetric} follows analogously. 

Now suppose that $T_i$ is full row rank for all $i \in \{1, \dots, m\}.$ Then we know $T_iT_i^\dagger=\Id_n$ for all $i \in \{1, \dots, m\}.$ We obtain
\begin{align*}
\Projuivi \shessti&=\sum_{i=1}^m (S^\top)^\dagger e^i_m (e^i_m)^\top S^\top \shessti T_i T_i^\dagger \\
&=\sum_{i=1}^m (S^\top)^\dagger e^i_m (e^i_m)^\top S^\top \shessti \\
&=(S^\top)^\dagger  \left ( \sum_{i=1}^m e^i_m (e^i_m)^\top \right )  S^\top \shessti \\
&=(S^\top)^\dagger    S^\top (S^\top)^\dagger  \dnsf(x^0;S,T_{1:m}) \\
&= (S^\top)^\dagger  \dnsf(x^0;S,T_{1:m})=\shessti.
\end{align*}
Equation \eqref{eq:projectionAsymmetric} follows analogously. 

Finally, suppose that $T_1=T_2=\dots=T_m=:\T$.  We have
\begin{align*}
    \Projuv \nabla_s^2 f(x^0;S,\T)&=(S^\top)^\dagger S^\top \nabla_s^2 f(x^0;S,\T) \T \T^\dagger \\
    &=(S^\top)^\dagger S^\top (S^\top)^\dagger \dnsf (x^0;S,\T) \T \T^\dagger \\
    &=(S^\top)^\dagger S^\top (S^\top)^\dagger \begin{bmatrix}  \left ( \nabla_s f(x^0+s^1;\T)-\nabla_s f(x^0;\T)\right )^\top \\ \vdots \\ \left ( \nabla_s f(x^0+s^m;\T)-\nabla_s f(x^0;\T)\right )^\top \end{bmatrix} \T \T^\dagger \\
    &=(S^\top)^\dagger S^\top (S^\top)^\dagger \begin{bmatrix}  \left ( (\T^\top)^\dagger\dsf (x^0+s^1;\T)-(\T^\top)^\dagger \dsf (x^0;\T)\right )^\top \\ \vdots \\  \left ( (\T^\top)^\dagger\dsf (x^0+s^m;\T)-(\T^\top)^\dagger \dsf(x^0;\T)\right )^\top \end{bmatrix} \T \T^\dagger \\
    &=(S^\top)^\dagger S^\top (S^\top)^\dagger \begin{bmatrix}  \left ( \dsf(x^0+s^1;\T)- \dsf(x^0;\T)\right )^\top \\ \vdots \\  \left ( \dsf(x^0+s^m;\T)- \dsf(x^0;\T)\right )^\top \end{bmatrix} \T^\dagger \T \T^\dagger \\
    &= (S^\top)^\dagger \begin{bmatrix}  \left ( \dsf(x^0+s^1;\T)- \dsf(x^0;\T)\right )^\top \\ \vdots \\  \left ( \dsf(x^0+s^m;\T)- \dsf(x^0;\T)\right )^\top \end{bmatrix} \T^\dagger
\end{align*}
by Property \eqref{eq:mpi2} of the Moore-Penrose pseudoinverse. Thus, $ \Projuv \nabla_s^2 f(x^0;S,\T)= \nabla_s^2 f(x^0;S,\T).$  Equation \eqref{eq:AsymmetricTallthesame} follows analogously. 



\end{proof}

In the following theorem and the remainder of this paper, we use the notation
\begin{align*}
\widehat T_i&=T_i/\Delta_{T_i}, \quad i \in \{1, \dots, m\},\\
\Delta_u&=\max \{\Delta_S, \Delta_{T_1}, \dots, \Delta_{T_m}\}, \\ 
\Delta_l&=\min \{\Delta_S, \Delta_{T_1}, \dots, \Delta_{T_m}\}, \\ 
\Delta_T&=\max \{ \Delta_{T_1}, \dots, \Delta_{T_m} \}\\
\widehat{T}&=\widehat T_i \quad \text{such that} \quad \left \Vert \widehat T_i^\dagger \right \Vert \quad \text{is maximal,} \quad i \in \{1, \dots, m\},  \\
k&= \max \{k_1, \dots, k_m\}, \\
H&=\nabla^2 f(x^0). 
\end{align*}

\begin{thm}[Error bounds for the GSH]\label{thm:mainprop}
Let $f:\dom f \subseteq \R^n\to\R$ be $\mathcal{C}^{3}$ on $B(x^0,\overline{\Delta})$ where $x^0 \in \dom f$ is the point of interest and $\overline{\Delta}>0$. Denote by $L_{\nabla^2 f}\geq 0$ the Lipschitz constant of $\nabla^2 f$ on $\overline{B}(x^0,\overline{\Delta})$ respectively. Let $S=\begin{bmatrix} s^1&s^2&\cdots&s^m\end{bmatrix} \in \R^{n \times m}$ and $T_i=\begin{bmatrix} t^1_i&t_i^2&\cdots&t_i^{k_i}\end{bmatrix} \in \R^{n \times k_i}$  for all $i \in \{1, \dots, m\}.$ Assume that $B(x^0,\Delta_{T_i})\subset B(x^0,\overline{\Delta})$ and $B(x^0+s^i,\Delta_{T_i})\subset B(x^0,\overline{\Delta})$  for all $i \in \{1, \dots m\}.$ 
 
 \begin{enumerate}[(i)]
 \item  If $S$ is full column  rank {\bf or} $T_i$ is full row rank for all $i \in \{1, \dots, m\}$, then
\begin{align}
\| \Projuivi \shessti- \Projuivi \nabla^2 f(x^0)\| &=\|  \shessti-  \Projuivi \nabla^2 f(x^0)\| \notag \\
&\leq 4m\sqrt{k} L_{\nabla^2 f}   \Vert (\widehat{S}^\top )^\dagger  \Vert \Vert \widehat{T}^\dagger \Vert   \left ( \frac{\Delta_u}{\Delta_l} \right )^2  \Delta_u.  \label{eq:seconditem}
\end{align}
\item  If $T_1=T_2=\dots=T_m=:\T$, then 
\begin{align}
\| \Projuv \shess - \Projuv \nabla^2 f(x^0)\| &= \|  \shess- \Projuv \nabla^2 f(x^0)\| \notag\\
&\leq  4 \sqrt{mk} L_{\nabla^2 f} \frac{\Delta_u}{\Delta_l}  \left \Vert(\widehat{S}^\top)^\dagger \right \Vert  \left \Vert  \widehat{T}^\dagger \right \Vert \Delta_u. \label{eq:simplexHessAllTequals}
\end{align}
\end{enumerate}
\end{thm}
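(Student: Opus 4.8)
The plan is to reduce the Hessian error to a sum of generalized-simplex-gradient errors and then invoke Proposition \ref{prop:GSGerror} twice for each block row. First I would use Proposition \ref{prop:projectiondoesnotaffect} to dispose of the equalities $\Projuivi \shessti = \shessti$ (under the hypothesis of item (i)) and $\Projuv \shess = \shess$ (under the hypothesis of item (ii)), so that only the inequality with the projected true Hessian remains to be proved. Next, writing out the definition \eqref{eq:formulaGSH}, we have $\shessti = (S^\top)^\dagger \dnsf(x^0;S,\Ti)$, and since $\Projuivi \nabla^2 f(x^0) = \sum_{i=1}^m (S^\top)^\dagger e^i_m (e^i_m)^\top S^\top \nabla^2 f(x^0) T_i T_i^\dagger$, I would factor out $(S^\top)^\dagger$ on the left and compare the $i$\textsuperscript{th} rows. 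The $i$\textsuperscript{th} row of $\dnsf$ is $(\ns f(x^0+s^i;T_i) - \ns f(x^0;T_i))^\top$, while the $i$\textsuperscript{th} row of $S^\top \nabla^2 f(x^0) T_i T_i^\dagger$ is $(s^i)^\top \nabla^2 f(x^0) T_i T_i^\dagger$. The key algebraic identity to establish is that $(s^i)^\top \nabla^2 f(x^0) T_i T_i^\dagger$ equals $\big(\Proj_{T_i} \nabla f(x^0+s^i) - \Proj_{T_i}\nabla f(x^0)\big)^\top$ up to a controllable error — intuitively, $\nabla^2 f(x^0) s^i \approx \nabla f(x^0+s^i) - \nabla f(x^0)$ by $\mathcal{C}^3$-Taylor expansion, with remainder $O(\|s^i\|^2)$.

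Then I would bound the $i$\textsuperscript{th} row error by splitting it via the triangle inequality into three pieces: (a) $\ns f(x^0+s^i;T_i) - \Proj_{T_i}\nabla f(x^0+s^i)$, bounded by $\frac{\sqrt{k_i}}{2} L_{\nabla^2 f}\,\|\widehat T_i^\dagger\|\,\Delta_{T_i}^2$ via Proposition \ref{prop:GSGerror} applied at the base point $x^0+s^i$ (using that on a ball of radius $\Delta_{T_i}$ the Lipschitz constant of $\nabla f$ is controlled by $L_{\nabla^2 f}\cdot(\text{radius})$, hence the extra $\Delta_{T_i}$); (b) the symmetric term at $x^0$, bounded analogously by $\frac{\sqrt{k_i}}{2}L_{\nabla^2 f}\|\widehat T_i^\dagger\|\Delta_{T_i}^2$; and (c) the Taylor remainder term $\big\|\Proj_{T_i}(\nabla f(x^0+s^i)-\nabla f(x^0)) - T_iT_i^\dagger{}^\top (s^i)^\top\nabla^2 f(x^0)\big\|$, bounded by something like $\frac{1}{2}L_{\nabla^2 f}\|T_iT_i^\dagger\|\Delta_S^2 \le \frac12 L_{\nabla^2 f}\Delta_S^2$ (using $\|T_iT_i^\dagger\|\le 1$). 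Collecting, each row contributes at most a constant times $L_{\nabla^2 f}\sqrt{k}\,\|\widehat T^\dagger\|\,\Delta_u^2$, and $\|\dnsf - S^\top H T_i T_i^\dagger\text{(stacked)}\|_F \le \sqrt m$ times the worst row. Multiplying on the left by $(S^\top)^\dagger = \Delta_S^{-1}(\widehat S^\top)^\dagger$ introduces the factor $\|(\widehat S^\top)^\dagger\|/\Delta_S$, and rewriting $\Delta_S^2/\Delta_S = \Delta_S \le \Delta_u$ together with inserting ratios $\Delta_{T_i}/\Delta_l \le \Delta_u/\Delta_l$ and $\Delta_S/\Delta_l \le \Delta_u/\Delta_l$ to homogenize, one lands on the claimed bound $4m\sqrt k\, L_{\nabla^2 f}\,\|(\widehat S^\top)^\dagger\|\,\|\widehat T^\dagger\|(\Delta_u/\Delta_l)^2\Delta_u$; for item (ii) all $T_i=\T$ are equal so one can collapse the stacked matrix using the $\Projuv$ computation from Proposition \ref{prop:projectiondoesnotaffect}, replace $\|\cdot\|_F \le \sqrt m \|\cdot\|$ estimates more tightly to get $\sqrt{mk}$ in place of $m\sqrt k$, and only one power of $\Delta_u/\Delta_l$ survives.

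The main obstacle I anticipate is step (c): cleanly extracting from $\mathcal{C}^3$-Taylor the statement that $\ns f(x^0+s^i;T_i)-\ns f(x^0;T_i) \approx (T_i^\top)^\dagger T_i^\top \nabla^2 f(x^0) s^i$ with a genuine $O(\Delta_u^2)$ remainder, rather than mixing up which variable ($s^i$ vs.\ the columns of $T_i$) carries which order. One must be careful that the simplex gradient $\ns f$ is itself only a first-order object, so the "difference of simplex gradients" inherits both the $\Delta_{T_i}^2$ error from the gradient approximation \emph{and} the $\Delta_S^2$ error from the finite-difference-in-$s^i$ direction; keeping these two sources separate, and then bounding both uniformly by $\Delta_u^2$ while tracking the $(S^\top)^\dagger$ and $(\widehat T_i^\top)^\dagger$ norms correctly through the Frobenius-norm submultiplicativity, is the delicate bookkeeping. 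A secondary technical point is justifying that $\|(\widehat T_i^\top)^\dagger\| = \|\widehat T_i^\dagger\|$ and that replacing each $\|\widehat T_i^\dagger\|$ by the maximal $\|\widehat T^\dagger\|$ is legitimate, which is immediate but should be stated.
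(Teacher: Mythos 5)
There is a genuine gap at the heart of your row-wise estimate. Your pieces (a) and (b) cannot be bounded by $\frac{\sqrt{k_i}}{2}L_{\nabla^2 f}\Vert\widehat T_i^\dagger\Vert\Delta_{T_i}^2$: the Lipschitz constant of $\nabla f$ on a ball of radius $\Delta_{T_i}$ is (essentially) $\sup\Vert\nabla^2 f\Vert$ on that ball, which is close to $\Vert\nabla^2 f(x^0)\Vert$ and does \emph{not} scale like $L_{\nabla^2 f}\cdot\Delta_{T_i}$ as the radius shrinks. The one-sided generalized simplex gradient is genuinely a first-order object: Proposition \ref{prop:GSGerror} only gives $\Vert\ns f(y;T_i)-\Proj_{T_i}\nabla f(y)\Vert=O(\Delta_{T_i})$, and under $\mathcal{C}^3$ its leading error term is $\tfrac12(\widehat T_i^\top)^\dagger\bigl[(t_i^j)^\top\nabla^2 f(y)\,t_i^j/\Delta_{T_i}\bigr]_j$, which does not vanish to second order. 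Consequently, bounding (a) and (b) separately by the triangle inequality leaves a numerator of size $O(\Delta_{T_i})$, and after multiplying by $(S^\top)^\dagger=\Delta_S^{-1}(\widehat S^\top)^\dagger$ you obtain only $O(\Delta_T/\Delta_S)$, which is $O(1)$ when the radii shrink proportionally — not the claimed $O(\Delta_u)$. The order-1 bound survives only because the gradient-approximation errors at the two base points $x^0+s^i$ and $x^0$ nearly cancel (their difference is controlled by $L_{\nabla^2 f}\Delta_S\Delta_{T_i}$ plus $O(\Delta_{T_i}^3)$ terms), and your three-way split destroys exactly this cancellation.

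The paper avoids the issue by never decomposing through true gradients: after factoring out $(S^\top)^\dagger$ and $\widehat T_i^\dagger$, each scalar entry of the $i$\textsuperscript{th} row is the four-point quantity $f(x^0+s^i+t_i^{j})-f(x^0+s^i)-f(x^0+t_i^{j})+f(x^0)-(s^i)^\top H t_i^{j}$, and expanding all three shifted values in a \emph{second-order Taylor expansion about the common point} $x^0$ makes the zeroth-, first-, and second-order terms cancel identically against $(s^i)^\top H t_i^{j}$, leaving only remainders bounded by $\tfrac12 L_{\nabla^2 f}(\Delta_S+\Delta_{T_i})^3$; dividing by $\Delta_S\Delta_{T_i}$ then yields the stated bounds (summing rows gives $m\sqrt k$ and $(\Delta_u/\Delta_l)^2$ in item (i); the Frobenius bound on the single stacked matrix gives $\sqrt{mk}$ and one ratio in item (ii)). Your treatment of the equalities via Proposition \ref{prop:projectiondoesnotaffect}, the factoring of $(S^\top)^\dagger$, piece (c), and the item (ii) bookkeeping match the paper, but to repair the core estimate you must either adopt this joint four-point Taylor expansion or bound (a) and (b) \emph{together} as a single difference of simplex-gradient errors, using the Lipschitz continuity of $\nabla^2 f$ to gain the missing factor of $\Delta_S+\Delta_{T_i}$.
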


\begin{proof} The equalities  in Items $(i)$ and $(ii)$ follow from \eqref{eq:partialdiag} and \eqref{eq:allTequal} respectively. We have 
\begin{align}
 &\|  \shessti- \Projuivi H\| \notag \\
 &= \left \Vert  (S^\top)^\dagger \dnsf (x^0;S,T_{1:m})   -\sum_{i=1}^m (S^\top)^\dagger e^i_m (e^i_m)^\top S^\top H T_i T_i^\dagger  \right \Vert \notag \\
&\leq  \frac{1}{\Delta_S} \left \Vert  (\widehat{S}^\top)^\dagger  \right \Vert  \left \Vert   \dnsf (x^0;S,T_{1:m})- \sum_{i=1}^m (s^i)^\top H T_i T_i^\dagger \right \Vert \notag \\
&= \frac{1}{\Delta_S} \left \Vert  (\widehat{S}^\top)^\dagger  \right \Vert \left \Vert \sum_{i=1}^m (e^i_m)^\top  \dnsf (x^0;S,T_{1:m})- (s^i)^\top H T_i T_i^\dagger\right \Vert  \notag\\
 & \leq \sum_{i=1}^m \frac{1}{\Delta_S \Delta_{T_i}} \left \Vert (\widehat{S}^\top)^\dagger \right \Vert \left \Vert \widehat{T_i}^\dagger \right \Vert  \left \Vert \left ( \dsf (x^0+s^i;T_i)-\dsf(x^0;T_i) \right )^\top -(s^i)^\top H T_i  \right \Vert. \label{eq:plugin}
\end{align}
Let $u_i= \left (\dsf (x^0+s^i;T_i)-\dsf(x^0;T_i) \right )^\top \in \R^{k_i}$ for all $\{1, \dots,m\}.$  Next we find a bound for each $\left [u_i-(s^i)^\top H T_i \right ]_{j_i}, j_i \in \{1, \dots, k_i\}.$ Note that
\begin{align*}
   \left  [u_i-(s^i)^\top H T_i \right ]_{j_i}&=f(x^0+s^i+t_i^{j_i})-f(x^0+s^i)-f(x^0+t_i^{j_i})+f(x^0)-(s^i)^\top H t_i^{j_i}.
\end{align*}
Each function value $f(x^0+s^i+t_i^{j_i}), f(x^0+s^i),$ and $f(x^0+t_i^{j_i}),$  can be written as a second-order Taylor expansion about $x^0$ plus a remainder term $R_2(\cdot).$ Hence, we obtain
\begin{align}
& \left \vert f(x^0+s^i+t_i^{j_i})-f(x^0+s^i)-f(x^0+t_i^{j_i}+f(x^0)-(s^i)^\top HT_i^{j_i} \right \vert \notag \\
&\leq \vert R_2(x^0+s^i+t_i^{j_i}) \vert +\vert R_2(x^0+s^i) \vert +\vert R_2(x^0+t_i^{j_i}) \vert \notag \\
& \leq \ \frac{1}{6}L_{\nabla^2 f} \Vert s^i+t_i^{j_i} \Vert^3+\frac{1}{6}L_{\nabla^2 f} \Vert s^i\Vert^3+\frac{1}{6}L_{\nabla^2 f} \Vert t_i^{j_i}\Vert^3  \notag \\
&\leq \frac{1}{2}L_{\nabla^2 f}(\Delta_S+\Delta_{T_i} )^3. \label{eq:bound1}
\end{align}
Plugging \eqref{eq:bound1} in \eqref{eq:plugin}, we obtain
\begin{align*}
\| \shessti- \Projuivi H\| &\leq \sum_{i=1}^m \frac{1}{\Delta_S \Delta_{T_i}} \left \Vert  (\widehat{S}^\top)^\dagger \right \Vert \left \Vert \widehat{T_i}^\dagger \right \Vert  \frac{\sqrt{k_i}}{2} L_{\nabla^2 f} (\Delta_S+\Delta_{T_i})^3 \\
&\leq \sum_{i=1}^m \frac{1}{\Delta_l^2} \left \Vert  (\widehat{S}^\top)^\dagger \right \Vert  \left \Vert \widehat{T}^\dagger \right \Vert  \frac{\sqrt{k}}{2} L_{\nabla^2 f} (2\Delta_u)^3 \\
&= 4m\sqrt{k} L_{\nabla^2 f} \left \Vert  (\widehat{S}^\top)^\dagger \right \Vert  \left \Vert \widehat{T}^\dagger \right \Vert \left ( \frac{\Delta_u}{\Delta_l} \right )^2  \Delta_u.
\end{align*}
Now, suppose that $T_1=T_2=\dots=T_m=:\T.$  We have
\begin{align}
   \Vert  \nabla_s^2 f(x^0;S,\T)- \Projuv H\Vert&= \left \Vert  (S^\top)^\dagger \dnsf (x^0;S,\T)   - (S^\top)^\dagger  S^\top H \T \T^\dagger  \right \Vert \notag \\
&\leq  \frac{1}{\Delta_S} \left \Vert  (\widehat{S}^\top)^\dagger  \right \Vert  \left \Vert   \dnsf (x^0;S,\T)-  S^\top H \T \T^\dagger  \right \Vert \notag \\
    &\leq \frac{1}{\Delta_S \Delta_T}\Vert(\widehat{S}^\top)^\dagger \Vert \widehat{T}^\dagger \Vert \Vert  \Vert (\Dsf)^\top-S^\top H \T  \Vert, \label{eq:youngstersimplex}
\end{align}
where $$(\Dsf)^\top=\begin{bmatrix} \left ( \dsf (x^0+s^1;\T))-\dsf(x^0;\T)\right )^\top \\ \vdots \\ \left ( \dsf(x^0+s^m;\T)-\dsf(x^0;\T) \right )^\top \end{bmatrix} \in \R^{m \times k}.$$ Each entry  $ \left [(\Dsf)^\top-S^\top H \T \right ]_{i,j}$ has the previous bound obtained in \eqref{eq:bound1}.
Since $$\Vert (\Dsf)^\top -S^\top H \T \Vert \leq \Vert (\Dsf)^\top -S^\top H \T \Vert_F,$$ using \eqref{eq:bound1} in \eqref{eq:youngstersimplex}, we get the inequality
\begin{align*}
    \Vert \nabla_s^2 f(x^0;S,\T)- \Projuv H\Vert &\leq \frac{\sqrt{mk}}{2}  L_{\nabla^2 f} \frac{(\Delta_S+\Delta_T)^3}{\Delta_S \Delta_T} \left \Vert(\widehat{S}^\top)^\dagger \right  \Vert  \left \Vert  \widehat{T}^\dagger \right \Vert \\
     &\leq 4 \sqrt{mk}  L_{\nabla^2 f} \frac{\Delta_u}{\Delta_l}  \left \Vert(\widehat{S}^\top)^\dagger \right \Vert  \left \Vert  \widehat{T}^\dagger \right \Vert \Delta_u.
\end{align*}
\end{proof}

The previous error bounds show that  the GSH is an order-1 accurate approximation of a partial Hessian,  or  an order-1 accurate approximation of the full Hessian when the GSH is $S$-determined\textbackslash overdetermined   and $\Ti$-determined\textbackslash overdetermined (equivalently,  $(S^\top)^\dagger S^\top=\Id_n$ and $T_iT_i^\dagger=\Id_n$ for all $i$). The ratio $\frac{\Delta_u}{\Delta_l}$ suggests that  the radii of the matrices should be taken to be of the same magnitude. If  one of the radius is decreased, it suggests that all radii  should be decreased  by the same amount.  Finally,  the presence of $m$ and $k$ in the error bounds roughly suggests that there is no  advantage of having $S$-overdetermined or $\Ti$-overdetermined. More details on this topic can be found in \cite{braunlimiting,hare2023limiting}.

When all matrices $T_i$ are equal, note that Item $(ii)$ covers  all 16 cases from Definition \ref{def:cases}. 
When all matrices $T_i$ are not equal, Item $(i)$ in the previous theorem covers 12 out of the 16 cases. It does not cover the case $S$-nondetermined\textbackslash overdetermined and $\Ti$-nondetermined \textbackslash underdetermined. These cases are left as a future research direction and discuss in the conclusion. 

\begin{thm}[Error bounds for the GCSH] \label{thm:ebcentered2}
Let $f:\dom f \subseteq \R^n\to\R$ be $\mathcal{C}^{4}$ on $B(x^0,\overline{\Delta})$ where $x^0 \in \dom f$ is the point of interest and $\overline{\Delta}>0$. Denote by $L_{\nabla^3 f}$ the Lipschitz constant of $\nabla^3 f$ on $\overline{B}(x^0,\overline{\Delta}).$ Let $S=\begin{bmatrix} s^1&s^2&\cdots&s^m\end{bmatrix} \in \R^{n \times m},$ $T_i=\begin{bmatrix} t_i^1&t_i^2&\cdots&t_i^{k_i} \end{bmatrix} \in \R^{n \times k_i}$ with $\overline{B}(x^0 \pm s^i,\Delta_{T_i})\subset B(x^0,\overline{\Delta})$  for all $i \in \{1, \dots, m\}$.  
\begin{enumerate}[(i)]
\item If $S$ is full column rank {\bf or} $T_i$ is full row rank for all $i \in \{1, \dots, m\}$, then 
\begin{align}\label{eq:undertildecshesstwo}
\Vert \Projuivi \cshesstitwo - \Projuivi \nabla^2 f(x^0) \Vert\nonumber &= \Vert  \cshesstitwo - \Projuivi \nabla^2 f(x^0) \Vert \notag \\
\leq& 2m  \sqrt{k}L_{\nabla^3 f} \left (\frac{\Delta_u}{\Delta_l} \right )^2  \left \Vert  (\widehat{S}^\top)^\dagger  \right \Vert \left \Vert \left (\widehat{T} \right )^\dagger \right \Vert \Delta^2_u.
\end{align}
 \item If  $T_1=T_2=\dots=T_m=:\T \in \R^{n \times k},$ then
\begin{align}
\left \|\Projuv \cshesstwo- \Projuv \nabla^2 f(x^0) \right \| &=\left \|  \cshesstwo- \Projuv \nabla^2 f(x^0) \right \| \notag \\
&\leq 2  \sqrt{mk}L_{\nabla^3 f} \frac{\Delta_u}{\Delta_l} \left \Vert  (\widehat{S}^\top)^\dagger  \right \Vert \left \Vert \left (\widehat{T} \right )^\dagger \right \Vert \Delta^2_u. \label{eq:tildecentered2ebTiequaltwo} 
\end{align}
\end{enumerate}
\end{thm}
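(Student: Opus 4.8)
The plan is to follow the proof of Theorem~\ref{thm:mainprop} almost verbatim, but to start from the centered representation of the GCSH supplied by Lemma~\ref{lem:gcshrelation1} and to carry the Taylor expansions one order further, from two to three. The two equalities on the left-hand sides of \eqref{eq:undertildecshesstwo} and \eqref{eq:tildecentered2ebTiequaltwo} come for free: under the hypotheses of item~$(i)$, equation \eqref{eq:projectionAsymmetric} of Proposition~\ref{prop:projectiondoesnotaffect} gives $\Projuivi\cshesstitwo=\cshesstitwo$, and under the hypothesis of item~$(ii)$, equation \eqref{eq:AsymmetricTallthesame} gives $\Projuv\cshesstwo=\cshesstwo$. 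So it remains to bound $\|\cshesstitwo-\Projuivi\nabla^2 f(x^0)\|$ in case~$(i)$ and $\|\cshesstwo-\Projuv\nabla^2 f(x^0)\|$ in case~$(ii)$.

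By Lemma~\ref{lem:gcshrelation1}, $\cshesstitwo=(S^\top)^\dagger\dcf(x^0;S,\Ti)$, while $\Projuivi\nabla^2 f(x^0)=(S^\top)^\dagger M$, where $M$ is the $m\times n$ matrix whose $i$th row is $(s^i)^\top\nabla^2 f(x^0)\,T_iT_i^\dagger$. Writing $(S^\top)^\dagger=\Delta_S^{-1}(\widehat S^\top)^\dagger$ and factoring $T_i^\dagger=\Delta_{T_i}^{-1}\widehat T_i^\dagger$ out of the $i$th row of $\dcf(x^0;S,\Ti)-M$, exactly as in \eqref{eq:plugin}, the whole estimate reduces to controlling, for each $i$ and each $j_i\in\{1,\dots,k_i\}$, the scalar difference between the row-vector factor multiplying $T_i^\dagger$ in the $i$th row of $\dcf$ — which by Lemma~\ref{lem:gcshrelation1} and \eqref{eq:centered2functionvalues} is a fixed linear combination of $f(x^0\pm(s^i+t_i^{j_i}))$, $f(x^0\pm s^i)$, $f(x^0\pm t_i^{j_i})$ and $f(x^0)$ — and the target $(s^i)^\top\nabla^2 f(x^0)\,t_i^{j_i}$. (One could instead try to read the result off from Theorem~\ref{thm:mainprop} via the identification $\cshesstitwo=\nabla_s^2 f(x^0;A,B_{1:2m})$ of Proposition~\ref{prop:relationGCSHGSH}; but that route only yields order-$1$ accuracy, since it ignores the symmetry of the sample set, so the direct third-order argument is genuinely needed.)

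For the scalar estimate I would expand each of the six function values $f(x^0\pm(s^i+t_i^{j_i}))$, $f(x^0\pm s^i)$, $f(x^0\pm t_i^{j_i})$ as a third-order Taylor polynomial about $x^0$ plus a Lagrange remainder $R_3$, with $|R_3(x^0+v)|\le\tfrac{1}{24}L_{\nabla^3 f}\|v\|^4$ (here the hypotheses $f\in\mathcal{C}^4$ and $\overline{B}(x^0\pm s^i,\Delta_{T_i})\subset B(x^0,\overline{\Delta})$ are used). The crux is that the centered construction pairs each evaluation at $x^0+v$ with one at $x^0-v$, so the odd-degree Taylor terms — both the linear term $\nabla f(x^0)^\top v$ and the cubic term $\tfrac{1}{6}\nabla^3 f(x^0)[v,v,v]$ — cancel identically, while the surviving quadratic terms recombine, via the polarization-type identity $(s^i+t_i^{j_i})^\top H(s^i+t_i^{j_i})-(s^i)^\top H s^i-(t_i^{j_i})^\top H t_i^{j_i}=2(s^i)^\top H t_i^{j_i}$ (with $H=\nabla^2 f(x^0)$), to reproduce exactly the target value carried by the projected Hessian. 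What is left is a fixed combination of at most six fourth-order remainders, hence bounded by a constant times $L_{\nabla^3 f}(\|s^i\|+\|t_i^{j_i}\|)^4\le L_{\nabla^3 f}(\Delta_S+\Delta_{T_i})^4\le L_{\nabla^3 f}(2\Delta_u)^4$; a careful count of the coefficients gives the per-entry bound $2L_{\nabla^3 f}\Delta_u^4$ (the centered analogue of \eqref{eq:bound1}). Expanding only to second order would leave an uncancelled third-order remainder and produce mere order-$1$ accuracy, so the extra order in the expansion is precisely what upgrades order-$1$ to order-$2$.

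To finish item~$(i)$ I would sum the $m$ row contributions of $\dcf(x^0;S,\Ti)-M$: the $i$th row has norm at most $\sqrt{k_i}\,(2L_{\nabla^3 f}\Delta_u^4)\,\|T_i^\dagger\|\le\sqrt{k}\,(2L_{\nabla^3 f}\Delta_u^4)\,\Delta_l^{-1}\|\widehat T^\dagger\|$ (using $\Delta_{T_i}\ge\Delta_l$ and $\|\widehat T_i^\dagger\|\le\|\widehat T^\dagger\|$), and multiplying by $\Delta_S^{-1}\|(\widehat S^\top)^\dagger\|\le\Delta_l^{-1}\|(\widehat S^\top)^\dagger\|$ produces the factor $(\Delta_u/\Delta_l)^2$ and the constant $2m\sqrt{k}$ of \eqref{eq:undertildecshesstwo}. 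For item~$(ii)$, with all $T_i=\T$, I would instead work with a single $m\times k$ matrix — the centered analogue of $(\Dsf)^\top-S^\top H\T$ from the proof of Theorem~\ref{thm:mainprop}$(ii)$ — bounding it by its Frobenius norm $\sqrt{mk}\,(2L_{\nabla^3 f}\Delta_u^4)$ and sandwiching it between $\Delta_S^{-1}\|(\widehat S^\top)^\dagger\|$ on the left and $\Delta_T^{-1}\|\widehat T^\dagger\|$ on the right; since in this case $\{\Delta_S,\Delta_T\}=\{\Delta_u,\Delta_l\}$ one has $\Delta_S\Delta_T=\Delta_u\Delta_l$, which turns $\Delta_u^4/(\Delta_S\Delta_T)$ into $(\Delta_u/\Delta_l)\Delta_u^2$ and gives the constant $2\sqrt{mk}$ of \eqref{eq:tildecentered2ebTiequaltwo}. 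The single genuinely delicate point is the third-order cancellation above — confirming that both the linear and the cubic pieces vanish and that the quadratic pieces reassemble into precisely the target $(s^i)^\top\nabla^2 f(x^0)\,t_i^{j_i}$; the pseudoinverse manipulations and the $\Delta$-bookkeeping are routine and mirror Theorem~\ref{thm:mainprop}.
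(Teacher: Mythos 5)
Your proposal is correct and follows essentially the same route as the paper's proof: the equalities via Proposition \ref{prop:projectiondoesnotaffect}, the representation of the GCSH from Lemma \ref{lem:gcshrelation1} together with \eqref{eq:centered2functionvalues}, row-wise factoring of $(\widehat S^\top)^\dagger$ and $\widehat T_i^\dagger$, entry-wise third-order Taylor expansions whose odd terms cancel and whose quadratic terms polarize to $(s^i)^\top \nabla^2 f(x^0) t_i^{j_i}$, and the same Frobenius-norm argument with $\Delta_S\Delta_T=\Delta_u\Delta_l$ in case $(ii)$. Your per-entry bound $\tfrac18 L_{\nabla^3 f}(\Delta_S+\Delta_{T_i})^4 \le 2L_{\nabla^3 f}\Delta_u^4$ and the resulting constants match those in the theorem.
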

\begin{proof}  The equalities in Items $(i)$ and $(ii)$  follow from \eqref{eq:projectionAsymmetric} and \eqref{eq:AsymmetricTallthesame}  respectively.  We have 
\begin{align}
&\left \Vert \nabla^2_{c} f(x^0;S,\Ti) - \Projuivi H \right \Vert \notag \\
&= \left \Vert  (S^\top)^\dagger \dcf (x^0;S,T_{1:m})   -\sum_{i=1}^m (S^\top)^\dagger e^i_m (e^i_m)^\top S^\top H T_i T_i^\dagger  \right \Vert \notag \\
&\leq  \frac{1}{\Delta_S} \left \Vert  (\widehat{S}^\top)^\dagger  \right \Vert  \left \Vert   \dcf (x^0;S,T_{1:m})- \sum_{i=1}^m (s^i)^\top H T_i T_i^\dagger \right \Vert \notag \\
&= \frac{1}{\Delta_S} \left \Vert  (\widehat{S}^\top)^\dagger  \right \Vert \left \Vert \sum_{i=1}^m (e^i_m)^\top  \dcf (x^0;S,T_{1:m})- (s^i)^\top H T_i T_i^\dagger\right \Vert  \notag\\
   &\leq  \medmath{\sum_{i=1}^m \frac{1}{\Delta_S \Delta_{T_i}} \left \Vert  (\widehat{S}^\top)^\dagger  \right \Vert \left \Vert \widehat{T_i}^\dagger \right  \Vert   \left \Vert \frac{1}{2} \left ( \dsf(x^0+s^i;T_i)+\dsf(x^0-s^i;-T_i)-\dsf(x^0;T_i)-\delta_f^s(x^0;-T_i) \right )^\top   -(s^i)^\top H T_i  \right \Vert.} \label{eq:firstinequality}
\end{align}
Let $w_i=\frac{1}{2} \left ( \dsf (x^0+s^i;T_i)+\dsf (x^0-s^i;-T_i)-\dsf (x^0;T_i)-\dsf (x^0;-T_i) \right )^\top \in \R^{k_i}.$  Next we find a bound for  each entry   $\left [w_i-(s^i)^\top H T_i \right ]_{j_i}, j_i \in \{1, \dots, k_i\}.$ Using \eqref{eq:centered2functionvalues}, we know that\small
\begin{align}
   &\left  [w_i-(s^i)^\top H T_i \right ]_{j_i} \notag \\ 
   =&\frac{1}{2} \left (f(x^0+s^i+t_i^{j_i})+f(x^0-s^i-t_i^{j_i})-f(x^0+s^i)-f(x^0-s^i)-f(x^0+t_i^{j_i})-f(x^0-t_i^{j_i})+2f(x^0) \right )\nonumber\\
   &-(s^i)^\top H t_i^{j_i}. \label{eq:centered2bound}
\end{align}\normalsize
Each function value $f(x^0+s^i+t_i^{j_i}), f(x^0-s^i-t_i^{j_i}), f(x^0+s^i), f(x^0-s^i), f(x^0+t_i^{j_i})$ and $f(x^0-t_i^{j_i})$ can be written as a third-order Taylor expansion about $x^0$ plus a remainder term $R_3(\cdot).$ It follows that
\begin{align}
     & \medmath{\left \vert \frac{1}{2} \left (f(x^0+s^i+t_i^{j_i})+f(x^0-s^i-t_i^{j_i})-f(x^0+s^i)-f(x^0-s^i)-f(x^0+t_i^{j_i})-f(x^0-t_i^{j_i})+2f(x^0) \right )-(s^i)^\top H t_i^{j_i} \right \vert} \notag \\
     &\leq \medmath{\frac{1}{2} \left (\Vert R_3(x^0+s^i+t_i^{j_i})\Vert +\Vert R_3(x^0-s^i-t_i^{j_i})\Vert + \Vert R_3(x^0+s^i)\Vert +\Vert R_3(x^0-s^i) \Vert +\Vert R_3(x^0+t_i^{j_i} \Vert +\Vert R_3(x^0-t_i^{j_i})\Vert \right )} \notag \\
     &\leq \medmath{ \frac1{48}  \left (L_{\nabla^3 f} \Vert s^i+t_i^{j_i}\Vert^4+\frac{1}{24} L_{\nabla^3 f} \Vert -s^i-t_i^{j_i} \Vert^4+L_{\nabla^3 f} \Vert -s^i\Vert^4+L_{\nabla^3 f} \Vert s^i \Vert^4 +L_{\nabla^3 f} \Vert t_i^{j_i} \Vert +L_{\nabla^3 f} \Vert -t_i^{j_i} \Vert   \right )}\notag\\
     &\leq \frac{1}{8} L_{\nabla^3 f} (\Delta_{S}+\Delta_{T_i})^4. \label{eq:deltasdeltatcentered2}
\end{align}
Plugging the bound from \eqref{eq:deltasdeltatcentered2} in \eqref{eq:centered2bound}, we get
\begin{align*}
\Vert \cshtwo f(x^0;S,T_{1:m})- \Projuivi H\Vert
&\leq \sum_{i=1}^m \frac{1}{\Delta_S \Delta_{T_i}}\left \Vert  (\widehat{S}^\top)^\dagger  \right \Vert \left \Vert \widehat{T_i}^\dagger \right \Vert   \sqrt{k_i} \frac{1}{8} L_{\nabla^3 f} (\Delta_{S}+\Delta_{T_i})^4  \\
&\leq \sum_{i=1}^m \frac{1}{\Delta_l^2} \left \Vert  (\widehat{S}^\top)^\dagger  \right \Vert \left \Vert \widehat{T}^\dagger \right  \Vert   \sqrt{k} \frac{1}{8} L_{\nabla^3 f} (2\Delta_u)^4  \\
&=2m  \sqrt{k}L_{\nabla^3 f} \left (\frac{\Delta_u}{\Delta_l} \right )^2  \left \Vert  (\widehat{S}^\top)^\dagger  \right \Vert \left \Vert \left (\widehat{T} \right )^\dagger \right \Vert \Delta^2_u.
\end{align*}

Now, suppose that $T_1=T_2=\dots=T_m=:\T.$  We have
\begin{align}
   \Vert  \cshtwo f(x^0;S,\T)- \Projuv H\Vert&= \left \Vert  (S^\top)^\dagger \dcf (x^0;S,\T)   - (S^\top)^\dagger  S^\top H \T \T^\dagger  \right \Vert \notag \\
&\leq  \frac{1}{\Delta_S} \left \Vert  (\widehat{S}^\top)^\dagger  \right \Vert  \left \Vert   \dcf (x^0;S,\T)-  S^\top H \T \T^\dagger  \right \Vert \notag \\
    &\leq \frac{1}{\Delta_S \Delta_T}\Vert(\widehat{S}^\top)^\dagger \Vert \widehat{T}^\dagger \Vert \Vert  \Vert (\Dcf)^\top-S^\top H \T  \Vert, \label{eq:youngstercentred}
\end{align}
where $$(\Dcf)^\top=\frac{1}{2} \begin{bmatrix} \left ( \dsf (x^0+s^1;\T)+\dsf (x^0-s^1;-\T)-\dsf(x^0;\T)-\dsf (x^0;-\T)\right )^\top \\ \vdots \\ \left ( \dsf(x^0+s^m;\T)+\dsf(x^0-s^m;-\T)-\dsf(x^0;\T)-\dsf(x^0;-\T) \right )^\top \end{bmatrix} \in \R^{m \times k}.$$ Each entry  $ \left [(\Dcf)^\top-S^\top H \T \right ]_{i,j}$ is bounded by \eqref{eq:deltasdeltatcentered2}.
Since $$\Vert (\Dcf)^\top -S^\top H \T \Vert \leq \Vert (D_f^c)^\top -S^\top H \T \Vert_F,$$ using \eqref{eq:deltasdeltatcentered2} in \eqref{eq:youngstercentred}, we get the inequality
\begin{align*}
    \Vert \cshtwo f(x^0;S,\T)- \Projuv H\Vert &\leq \frac{\sqrt{mk}}{24}  L_{\nabla^3 f} \frac{(\Delta_S+\Delta_T)^4}{\Delta_S \Delta_T} \left \Vert(\widehat{S}^\top)^\dagger \right  \Vert  \left \Vert  \widehat{T}^\dagger \right \Vert \\
     &\leq \frac{2 \sqrt{mk} }{3}  L_{\nabla^3 f} \frac{\Delta_u}{\Delta_l}  \left \Vert(\widehat{S}^\top)^\dagger \right \Vert  \left \Vert  \widehat{T}^\dagger \right \Vert \Delta_u^2.
\end{align*}
\end{proof}

The  main differences between the  error bounds presented for the GCSH in Theorem \ref{thm:ebcentered2} and  the GSH in Theorem  \ref{thm:mainprop} are
\begin{itemize}
\item the error bounds in Theorem \ref{thm:ebcentered2} assume that $f \in \mathcal{C}^4$ and the error bounds in Theorem \ref{thm:mainprop} assume $f \in \mathcal{C}^3,$
\item the error bounds in Theorem \ref{thm:ebcentered2} involve the Lipschitz constant $L_{\nabla^3 f}$ and the error bounds in Theorem \ref{thm:mainprop} involve the Lipschitz constant $L_{\nabla^2 f},$
\item the error bounds in Theorem \ref{thm:ebcentered2} are order-2 accurate and the error bounds in Theorem \ref{thm:mainprop} are order-1 accurate.
\end{itemize}

Item $(ii)$ of Theorems \ref{thm:mainprop} and \ref{thm:ebcentered2} is more restrictive than Item $(i)$, but in return it can offer additional benefits, such as symmetry. In  the following proposition, we show that the transpose of the GSH (GCSH) of $f$ at $x^0$ over $S$ and $\overline T$ is the same as the GSH (GCSH) of $f$ at $x^0$ over $\overline T$ and $S$.

\begin{prop} \label{lem:stequalts}
Let $f:\dom f \subseteq \R^n \to \R$ and let $x^0 \in \dom f$ be the point of interest.  Let $S=\begin{bmatrix} s^1&s^2&\cdots &s^m \end{bmatrix}  \in \R^{n \times m}$ and $\overline T=\begin{bmatrix} t^1&t^2&\cdots&t^k \end{bmatrix} \in \R^{n \times k}$  with $x^0 \oplus  S  \oplus  T_i,  x^0 \oplus (-S) \oplus (-T_i),  x^0 \oplus (\pm S),$ and $x^0 \oplus (\pm T_i)$ contained   in  $\dom f$ for all $i \in \{1, \dots, m\}.$Then
$$\left (\nabla^2_s f(x^0;S,\overline T)\right )^\top=\nabla^2_s f(x^0;\overline T,S),$$
and $$\left (\nabla^2_c f(x^0;S,\overline T)\right )^\top=\nabla^2_c f(x^0;\overline T,S).$$
\end{prop}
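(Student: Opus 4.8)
The plan is to write both the GSH and the GCSH for the matrices $S$ and $\overline T$ explicitly in terms of the difference operators $\dsf(\cdot;\cdot)$ and the pseudoinverses $(S^\top)^\dagger$ and $(\overline T^\top)^\dagger$, and then recognize that swapping the roles of $S$ and $\overline T$ amounts precisely to transposing the resulting matrix.

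First I would handle the GSH. Starting from Definition \ref{def:gsh} with all $T_i$ equal to $\overline T$, I would expand each generalized simplex gradient $\nabla_s f(x^0+s^i;\overline T)=(\overline T^\top)^\dagger\,\dsf(x^0+s^i;\overline T)$ (and similarly at $x^0$), so that the $(i,\cdot)$ row of $\dnsf(x^0;S,\overline T)$ becomes $\big(\dsf(x^0+s^i;\overline T)-\dsf(x^0;\overline T)\big)^\top(\overline T^\top)^{\dagger\top}$. Collecting rows, I would obtain the compact form $\nabla_s^2 f(x^0;S,\overline T)=(S^\top)^\dagger\, E\, \overline T^\dagger$, where $E\in\R^{m\times k}$ is the matrix with entries $E_{i,j}=f(x^0+s^i+t^j)-f(x^0+s^i)-f(x^0+t^j)+f(x^0)$ — a quantity manifestly symmetric under the interchange $s^i\leftrightarrow t^j$. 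Taking the transpose gives $\big(\nabla_s^2 f(x^0;S,\overline T)\big)^\top=(\overline T^\top)^{\dagger\top}\!{}^\top\!\ldots$; more cleanly, using $((S^\top)^\dagger)^\top=S^\dagger{}^\top=\cdots$, one gets $(S^\top)^{\dagger\top}=(S^\dagger)^\top$ and the standard identity $(A^\dagger)^\top=(A^\top)^\dagger$, so the transpose equals $((\overline T^\dagger)^\top)\,E^\top\,((S^\top)^\dagger)^{}=(\overline T^\top)^\dagger\,E^\top\,S^\dagger$. Writing out $\nabla_s^2 f(x^0;\overline T,S)$ directly from the definition yields exactly $(\overline T^\top)^\dagger\,\tilde E\,S^\dagger$ with $\tilde E_{j,i}=f(x^0+t^j+s^i)-f(x^0+t^j)-f(x^0+s^i)+f(x^0)=E_{i,j}$, i.e.\ $\tilde E=E^\top$, and the two expressions coincide.

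For the GCSH I would argue identically, using Lemma \ref{lem:gcshrelation1} to write $\nabla_c^2 f(x^0;S,\overline T)=(S^\top)^\dagger\, F\,\overline T^\dagger$ where, by \eqref{eq:centered2functionvalues}, the $(i,j)$ entry of $F$ is
$$F_{i,j}=\tfrac12\big(f(x^0+s^i+t^j)+f(x^0-s^i-t^j)-f(x^0+s^i)-f(x^0-s^i)-f(x^0+t^j)-f(x^0-t^j)+2f(x^0)\big),$$
which is again symmetric in $s^i\leftrightarrow t^j$; transposing and comparing with the direct expansion of $\nabla_c^2 f(x^0;\overline T,S)$ finishes the proof. (Alternatively, once the GSH case is done, the GCSH case follows immediately from Definition \ref{def:gcsh} by applying the GSH identity to the pairs $(S,\overline T)$ and $(-S,-\overline T)$ and averaging, since transposition commutes with the average.)

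The only mildly delicate point — and the one I would state as a small preliminary observation — is the pseudoinverse bookkeeping: the factor $(\overline T^\top)^\dagger$ sitting on the right of the row vectors must be transposed to $(\overline T^\top)^{\dagger\top}=((\overline T^\top)^\top)^\dagger=\overline T^\dagger$ when one moves it to the left, using the identity $(A^\dagger)^\top=(A^\top)^\dagger$, which is immediate from the four Moore--Penrose equations in Definition \ref{def:mpinverse}. Everything else is a routine transpose-and-relabel computation, so I would keep the write-up brief.
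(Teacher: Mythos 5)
Your proposal is correct and follows essentially the same route as the paper: both expand the GSH into the pseudoinverse factors times the second-difference matrix $E_{i,j}=f(x^0+s^i+t^j)-f(x^0+s^i)-f(x^0+t^j)+f(x^0)$, exploit its symmetry under $s^i\leftrightarrow t^j$ together with $(A^\dagger)^\top=(A^\top)^\dagger$, and then obtain the GCSH statement directly from Definition \ref{def:gcsh} by applying the GSH identity to $(S,\overline T)$ and $(-S,-\overline T)$. The only difference is cosmetic: you transpose the compact three-factor form $(S^\top)^\dagger E\,\overline T^\dagger$, while the paper transposes first and reassembles the difference matrix row by row.
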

\begin{proof}
We have
\begin{align*}
    \left (\nabla_s^2 f(x^0;S,\overline T) \right )^\top&=\left ((S^\top)^\dagger \dnsf (x^0;S,\overline T) \right )^\top \\
    &=(\dnsf (x^0;S,\overline T))^\top S^\dagger \\
    &=\begin{bmatrix} \nabla_s f(x^0+s^1;\overline T)-\nabla_s f(x^0;\overline T)&\cdots& \nabla_s f(x^0+s^m;\overline T)-\nabla_s f(x^0;\overline T) \end{bmatrix} S^\dagger \\
    &=(\overline T^\top)^\dagger \begin{bmatrix} \dsf(x^0+s^1;\overline T)-\dsf(x^0;\overline T)&\cdots&\dsf (x^0+s^m;\overline T)- \dsf(x^0;\overline T) \end{bmatrix} S^\dagger. 
\end{align*}
Let $\Dsf=\begin{bmatrix} \dsf(x^0+s^1;\overline T)-\dsf(x^0;\overline T)&\cdots&\dsf (x^0+s^m;\overline{T})- \dsf(x^0;\overline{T}) \end{bmatrix} \in \R^{ k \times m}.$ Then
\begin{align*}
   [\Dsf]_{i,j}&=f(x^0+s^i+t^j)-f(x^0+s^i)-f(x^0+t^j)+f(x^0).
\end{align*}
Note that 
\begin{align*}
    \Dsf&=\begin{bmatrix} \left (\dsf(x^0+t^1;S)-\dsf (x^0;S) \right )^\top \\ \vdots \\ \left (\dsf(x^0+t^k;S)- \dsf (x^0;S) \right )^\top \end{bmatrix}.
\end{align*}
Hence,
\begin{align*}
    \left (\nabla_s^2 f(x^0;S,\overline T) \right )^\top&=(\overline T^\top)^\dagger \begin{bmatrix} \left (\dsf(x^0+t^1;S)-\dsf (x^0;S) \right )^\top \\ \vdots \\ \left (\dsf(x^0+t^k;S)- \dsf (x^0;S) \right )^\top \end{bmatrix} S^\dagger \\
    &=(\overline T^\top)^\dagger \begin{bmatrix} \left ((S^\dagger)^\top \dsf(x^0+t^1;S)- (S^\dagger)^\top \dsf (x^0;S) \right )^\top \\ \vdots \\ \left ((S^\dagger)^\top \dsf(x^0+t^k;S)- (S^\dagger)^\top \dsf (x^0;S) \right )^\top \end{bmatrix} \\
    &=(\overline T^\top)^\dagger \dnsf(x^0;\overline T,S)= \nabla_s^2 f(x^0;\overline T,S). 
\end{align*}
The second equality follows from the definition of the GCSH (Definition \ref{def:gcsh}).  
\end{proof}

In the next section,  we investigate how to obtain an order-$1$ accurate approximation of the full Hessian with a  minimal number of sample points.

\section{Minimal poised set and quadratic interpolation}\label{sec:qishc}
In this section,  we investigate how to choose the matrices of directions $S$ and $T_i$ to obtain an approximation of the full Hessian with a minimal number of sample points. Reducing the number of distinct points  is extremely valuable, as it will decrease the number of distinct function evaluations necessary to compute the GSH.
For all of this section, we set $T_1=\cdots=T_m=:\overline T \in \R^{n \times k}$. We show that if $S$ and $\overline T$ have a specific structure, then the number of distinct function evaluations necessary to compute the GSH is $(n+1)(n+2)/2.$  We then explore some results that occur when such a structure is used. We begin by defining a \emph{set for GSH computation}.

\begin{df}
The set of all distinct points utilized in the computation of $\nabla_s^2 f(x^0;S,\T)$ is said to be the set for {\em GSH computation} and is denoted $\s(x^0;S,\T).$ 
\end{df} 

Note that $\s(x^0;S,\T)$ contains at most  $(m+1)(k+1)$ distinct points, but can contain fewer points if some of them overlap. 
Next, we introduce the  definition of \emph{minimal poised set} for the GSH. 
\begin{df}[Minimal poised set for the GSH]\label{df:minimalGSH}
Let  $x^0 \in \R^n$ be the point of interest.  Let $S=\begin{bmatrix} s^1&s^2&\cdots&s^n\end{bmatrix} \in \R^{n \times n}$ and $\overline T = \begin{bmatrix} t^1 & t^2 & \cdots & t^n \end{bmatrix} \in \R^{n \times n}$.  We say that $\s(x^0;S,\overline T)$ is a \emph{minimal poised set for the GSH} at $x^0$ if and only if $S$ and $\overline T$ are full rank and $\s(x^0;S,\overline T)$ contains exactly $(n+1)(n+2)/2$ distinct points.
\end{df}

This definition requires $S$ and $\overline T$ to have exactly $n$ columns and to be full rank. This is the case where the GSH is  $S$-determined and $\T$-determined. This implies that  $S$ and $\overline T$ are of minimal size to ensure that the GSH is an order-1 accurate approximation of the full Hessian.

Note that  a minimal poised set for the GCSH could be defined in a similar manner. Obviously, such a set must contain more than $(n+1)(n+2)/2.$ An investigation of this topic is left as a future research direction.
We next show that it is possible to create a minimal poised set for the GSH.
\begin{prop} \label{prop:reducingfe}
Let $x^0$ be the point of interest.  Let $S=\begin{bmatrix} s^1&s^2&\cdots&s^n\end{bmatrix} \in \R^{n \times n}$.  Define the set $U_\ell$ for each index $\ell\in\{0,1,\ldots,n\}$ as  $$U_0=S$$ and 
    $$U_\ell=\begin{bmatrix} s^1-s^\ell&s^2-s^\ell&\cdots&s^{\ell-1}-s^\ell&-s^\ell&s^{\ell+1}-s^\ell&\cdots&s^n-s^\ell\end{bmatrix} \in \R^{n \times n},\ell\neq0.$$  Then for each $\ell$, $|\s(x^0;S,U_\ell)| \leq (n+1)(n+2)/2$.  Moreover, if $S$ has full rank, then $|\s(x^0;S,U_\ell)| = (n+1)(n+2)/2$.
\end{prop}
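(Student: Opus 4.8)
The plan is to understand the set $\s(x^0;S,U_\ell)$ explicitly as a union of three pieces and count the distinct points in each, then identify exactly which coincidences are forced by the algebraic structure. Recall that $\s(x^0;S,U_\ell)$ collects all points used to form $\nabla_s^2 f(x^0;S,U_\ell)$, namely (a) the point of interest $x^0$ itself; (b) the shifted points $x^0 + s^i$ for $i\in\{1,\dots,n\}$; (c) the points $x^0 + u_\ell^j$ for $j\in\{1,\dots,n\}$ (these enter through $\nabla_s f(x^0;U_\ell)$); and (d) the points $x^0 + s^i + u_\ell^j$ for all $i,j$ (these enter through $\nabla_s f(x^0+s^i;U_\ell)$). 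So a priori there are at most $1 + n + n + n^2 = (n+1)^2$ points; I must cut this down to $(n+1)(n+2)/2 = 1 + n + \binom{n+1}{2}$, i.e.\ I need the $n + n^2 = n(n+1)$ points of types (c) and (d) to collapse to $\binom{n+1}{2} = n(n+1)/2$ distinct points, exactly a factor of two.

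First I would handle the case $\ell = 0$ separately (here $U_0 = S$): types (b) and (c) coincide, giving $\{x^0+s^i\}$, and type (d) gives $\{x^0+s^i+s^j\}$, whose distinct count is at most $\binom{n}{2} + n = \binom{n+1}{2}$ (pairs $\{i,j\}$ with $i\ne j$, plus the diagonal $2s^i$), with equality when $S$ is full rank since then the vectors $s^i + s^j$ are pairwise distinct. Total: $1 + n + \binom{n+1}{2} = (n+1)(n+2)/2$. For $\ell \ne 0$, write $u_\ell^j = s^j - s^\ell$ for $j \ne \ell$ and $u_\ell^\ell = -s^\ell$; uniformly, $u_\ell^j = s^j - s^\ell$ if we set $s^0 := 0$ and let $j$ range suitably — cleanest is to just say $x^0 + u_\ell^j \in \{x^0 + s^j - s^\ell : j \in \{0,1,\dots,n\}\setminus\{\ell\}\}$ where $s^0 := \zero$. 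Then type (c) points are $x^0 + s^j - s^\ell$ and type (d) points are $x^0 + s^i + s^j - s^\ell$ for $i \in \{1,\dots,n\}$, $j \in \{0,\dots,n\}\setminus\{\ell\}$. The key observation is that translating everything by $+s^\ell - x^0$ puts types (b),(c),(d) all into the single family $\{s^i + s^j : i,j \in \{0,1,\dots,n\}\}$ — type (b) is $i$ arbitrary, $j = \ell$; type (c) is $i = 0$ (or $i=\ell$ giving $x^0$), and type (d) is general — so the whole sample set, shifted, is contained in $\{s^i + s^j : 0 \le i,j \le n\}$ together with $x^0$-image; and as in the $\ell = 0$ case this symmetric-sum set has at most $\binom{n+1}{2} + (n+1)$ elements, one for each unordered pair from $\{0,1,\dots,n\}$ (note $s^0 = 0$ contributes $s^i + s^0 = s^i$). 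Careful bookkeeping of which pairs actually occur and subtracting $x^0 = x^0 + s^\ell - s^\ell$ appropriately yields the bound $(n+1)(n+2)/2$, and full rank of $S$ makes all these sums distinct, giving equality.

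I expect the main obstacle to be the bookkeeping in the $\ell \ne 0$ case: verifying that after the translation the sample set is \emph{exactly} indexed by unordered pairs $\{i,j\} \subseteq \{0,1,\dots,n\}$ with no missing and no extra pairs, and that the lone point $x^0$ is correctly accounted for (it equals the "pair" $\{\ell,\ell\}$ up to translation, i.e.\ $2s^\ell$, which might or might not already be forced to appear). The cleanest route is probably: define $\Phi := \{x^0 + s^i + s^j : 0 \le i \le j \le n\}$ with $s^0 := \zero$, show $\s(x^0;S,U_\ell) \subseteq \Phi$ for every $\ell$ (including $\ell = 0$), note $|\Phi| \le \binom{n+2}{2} = (n+1)(n+2)/2$ always, and $|\Phi| = (n+1)(n+2)/2$ when $\{s^1,\dots,s^n\}$ — hence $\{s^0,\dots,s^n\}$ as an affinely independent set — is such that all pairwise sums $s^i + s^j$ are distinct, which follows from $S$ being full rank (if $s^i + s^j = s^{i'} + s^{j'}$ as a nontrivial relation, linear independence of $\{s^1,\dots,s^n\}$ forces the index multisets to match). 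Then the reverse inclusion $\Phi \subseteq \s(x^0;S,U_\ell)$ in the full-rank case — needed only to get equality — is checked by exhibiting each $s^i + s^j$ as one of the four types above for the given $\ell$, which is a short case check on whether $i$ or $j$ equals $\ell$ or $0$.
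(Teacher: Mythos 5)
Your proposal is correct and follows essentially the same route as the paper: enumerate the evaluation points of the $n+1$ generalized simplex gradients and count them, using full rank of $S$ for distinctness; your indexing by unordered pairs $\{i,j\}\subseteq\{0,1,\dots,n\}$ with $s^0:=\zero$ is just a tidier, unified bookkeeping of the paper's explicit two-case list ($\ell=0$ and $\ell\neq 0$). Only fix the slips in your final paragraph: for $\ell\neq 0$ the containing set must be $\{x^0-s^\ell+s^i+s^j : 0\le i\le j\le n\}$ (which is exactly what your translation by $s^\ell-x^0$ produces), not $\{x^0+s^i+s^j : 0\le i\le j\le n\}$, and the point $x^0$ corresponds to the pair $\{0,\ell\}$ (translated image $s^\ell$), not to $\{\ell,\ell\}$.
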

\begin{proof} Without loss of generality, let $x^0=\zero$. First, suppose $\ell\in\{1, 2, \dots, n\}.$  For arbitrary function $f$, consider the matrix $\dnsf (x^0;S,\T)$ where $\T=U_\ell.$ The computation of $\nabla_s f(x^0;U_\ell)$ evaluates $f$ at the points  
    $$\begin{array}{l}
    ~~~\{\zero, (s^1-s^\ell), \dots,  (s^{\ell-1}-s^\ell), -s^\ell,  (s^{\ell+1}-s^\ell),\dots, (s^n-s^\ell)\}\\
    =\{\zero\}\cup\{-s^\ell\}\cup \bigcup\limits_{\substack{i=1\\ i \neq \ell}}^{n} \left\{s^{i}-s^\ell \right\}. \end{array}$$
For $i \neq \ell$, the computation of $\nabla_s f(x^0+s^i;U_k)$ evaluates $f$ at the points 
    $$\begin{array}{l}
    ~~~\{s^i, s^i+(s^1-s^\ell), \dots, s^i+(s^{\ell-1}-s^\ell), s^i-s^\ell,  s^i+(s^{\ell+1}-s^\ell),\dots, s^i+(s^n-s^\ell)\}\\
    =\{s^i\}\cup\{s^i-s^\ell\}\cup \bigcup \limits_{\substack{j=1 \\j \neq \ell}}^{n} \{s^i + s^j - s^\ell\}.
    \end{array}$$
The computation of $\nabla_s f(x^0+s^\ell;U_\ell)$ evaluates $f$ at the points 
   $$\begin{array}{l}
    ~~~\{s^\ell, s^\ell+(s^1-s^\ell), \dots, s^\ell+(s^{\ell-1}-s^\ell), s^\ell-s^\ell,  s^\ell+(s^{\ell+1}-s^\ell),\dots, s^\ell+(s^n-s^\ell)\}\\
    =\{s^\ell\}\cup\{\zero\}\cup \bigcup \limits_{\substack{i=1 \\i \neq \ell}}^{n} \{s^i\} =\{\zero\}\cup S.
    \end{array}$$
Thus, $f$ is evaluated at the points  
    \begin{equation}\label{eq:listofpoints}\begin{array}{l} \tiny{\bigg(\{\zero\} \cup \{-s^\ell\}\cup \bigcup \limits_{\substack{i=1 \\i \neq \ell}}^{n}\left\{s^{i}-s^\ell \right\}\bigg) 
     \cup 
    \bigg(\bigcup \limits_{\substack{i=1\\ i \neq \ell}}^{n} \{s^i\}\cup \bigcup \limits_{\substack{i=1\\i \neq \ell}}^{n} \{s^i-s^\ell\}\cup \bigcup \limits_{\substack{i=1 \\i\neq \ell}}^{n} \bigcup \limits_{\substack{j \geq i\\ j \neq \ell}}\{s^i + s^j - s^\ell\}\bigg)
     \cup 
    \bigg(  \{\zero\}\cup S \bigg)}\\
    = \{\zero\} \cup S \cup\{-s^\ell\}\cup \bigcup \limits_{ \substack {i=1\\i \neq \ell}}^{n} \left\{s^{i}-s^\ell \right\} \cup \bigcup \limits_{\substack{i=1 \\i\neq \ell}}^{n} \bigcup \limits_{\substack{j \geq i\\ j \neq \ell}}\{s^i + s^j - s^\ell\}  .
    \end{array}\end{equation}
This is at most $1+n+1+(n-1)+(n-1)(n-2)/2 = (n+1)(n+2)/2$ points.

Now, suppose $\ell=0.$ Using a similar process to the above, we find that $f$ is evaluated at the points   
\begin{equation}\label{eq:listofpointsu0}\begin{array}{l} \bigg(\{\zero\}\cup \bigcup \limits_{i=1}^n \left\{s^{i} \right \} \bigg) 
    ~ \cup ~
    \bigg( \bigcup \limits_{i=1}^n \{2s^i\}\cup \bigcup \limits_{i =1}^{n} \bigcup \limits_{j>i} \{s^i+s^j\}\bigg).
    \end{array}\end{equation}
This is at most $(n+1)(n+2)/2$ points.

Finally, if $S$ is full rank, then the four sets in  \eqref{eq:listofpoints} and the four sets in \eqref{eq:listofpointsu0} are disjoint, so we have exactly $ (n+1)(n+2)/2$ function evaluations.\qedhere
\end{proof}

Using $S=\Id_n$ in Proposition \ref{prop:reducingfe}, we can create $n+1$ canonical minimal poised sets for the GSH.
\begin{df}[$\ell^{\mbox{th}}$-canonical minimal poised set for the GSH] \label{def:canminset}
Let $x^0 \in \R^n$ be the point of interest.  Let $S=\Id_n$. Fix $\ell \in \{0, 1, \dots, n\}$. Let  
    $$E_0= \Id_n$$
and
    $$E_\ell=\begin{bmatrix} e^1-e^\ell&e^2-e^\ell&\cdots&e^{\ell-1}-e^\ell&-e^\ell&e^{\ell+1}-e^\ell&\cdots&e^n-e^\ell\end{bmatrix}, \ell\neq0.$$ 
Then $\s(x^0;\Id_n,E_\ell)$ is  called the {\em $\ell^{\mbox{th}}$-canonical minimal poised set for the GSH} at $x^0$. 
\end{df}

From Proposition \ref{prop:reducingfe} and the fact that any matrix $E_k$ is full rank in Definition \ref{def:canminset}, the $\ell^{\mbox{th}}$-canonical minimal poised set for the GSH is indeed a minimal poised set for the GSH. Henceforth, we use the notation $\m(x^0;S,U_k)$ to denote a minimal poised set for the GSH at $x^0$ that takes the form constructed in Proposition \ref{prop:reducingfe}.

Note that the order of the directions in $S$ and $\overline T$ is arbitrary.  Thus, it is immediately clear that if $\s(x^0;S,\overline T)$ is a minimal poised set for the GSH and $P_1, P_2 \in \R^{n\times n}$ are permutation matrices, then  $\s(x^0;SP_1,\overline TP_2)$ is also a minimal poised set for the GSH at $x^0$.  The next proposition expands this idea and demonstrates how to construct minimal poised sets for the GSH. 

\begin{prop}\label{prop:invmatrix}
Let $x^0 \in \R^n$ be the point of interest. Let $S,\overline T\in\R^{n \times n}$.  Let $N \in \R^{n \times n}$ be an invertible matrix and $P_1, P_2\in \R^{n \times n}$ be permutation matrices. Then  $\s(x^0;S,\overline T)$ is a minimal poised set for the GSH at $x^0$ if and only if $\s(x^0;NSP_1,N\overline TP_2)$ is a minimal poised set for the GSH at $x^0$.
\end{prop}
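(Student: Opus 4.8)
The plan is to show that the two defining conditions of a minimal poised set for the GSH in Definition \ref{df:minimalGSH} --- full rank of the direction matrices, and the sample set having exactly $(n+1)(n+2)/2$ distinct points --- are each left invariant when $S,\overline T$ are replaced by $NSP_1,N\overline TP_2$, and that the replacement can be undone. Since full rank is independent of the point of interest and every sample point carries an additive copy of $x^0$, I would first reduce to $x^0=\zero$: translating by $-x^0$ sends $\s(x^0;S,\overline T)$ bijectively to $\s(\zero;S,\overline T)$ and likewise for the other matrices, so no cardinality changes. Unwinding Definition \ref{def:gsh} together with the definition of $\ns f$, the computation of $\sh f(\zero;S,\overline T)$ evaluates $f$ exactly at $\zero$, at $s^i$ for $1\le i\le n$, at $t^j$ for $1\le j\le n$, and at $s^i+t^j$ for $1\le i,j\le n$; hence
\begin{align*}
\s(\zero;S,\overline T)=\{\zero\}\cup\{s^1,\dots,s^n\}\cup\{t^1,\dots,t^n\}\cup\{\,s^i+t^j : 1\le i,j\le n\,\}.
\end{align*}
Because the columns of $SP_1$ (resp.\ $\overline TP_2$) are merely a reordering of those of $S$ (resp.\ $\overline T$), passing to $SP_1,\overline TP_2$ changes none of these four families --- this recovers the observation made just before the proposition.

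Next I would use invertibility of $N$. From $N(s^i+t^j)=Ns^i+Nt^j$ and $N\zero=\zero$, the linear bijection $v\mapsto Nv$ carries the four families above exactly onto the four families describing $\s(\zero;NSP_1,N\overline TP_2)$, so $\s(\zero;NSP_1,N\overline TP_2)=N\bigl(\s(\zero;S,\overline T)\bigr)$; being the image of a set under a bijection of $\R^n$, it has the same cardinality. Moreover $\rank(NSP_1)=\rank S$ and $\rank(N\overline TP_2)=\rank\overline T$ because $N$ and the permutation matrices $P_1,P_2$ are invertible. Therefore $S,\overline T$ are full rank with $|\s(\zero;S,\overline T)|=(n+1)(n+2)/2$ if and only if $NSP_1,N\overline TP_2$ are full rank with $|\s(\zero;NSP_1,N\overline TP_2)|=(n+1)(n+2)/2$, which by Definition \ref{df:minimalGSH} is precisely the asserted equivalence. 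The ``only if'' direction may also be obtained by applying the ``if'' direction to $N^{-1},P_1^{-1},P_2^{-1}$.

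This argument is almost entirely bookkeeping; the one step that warrants care is the set identity $\s(\zero;NSP_1,N\overline TP_2)=N\bigl(\s(\zero;S,\overline T)\bigr)$, where one must verify that each of the four families transforms as claimed --- in particular that the cross terms behave as $s^i+t^j\mapsto Ns^i+Nt^j$ rather than picking up extra collisions. Rank preservation and reversibility through $N^{-1}$ are then immediate, and assembling the equivalence from Definition \ref{df:minimalGSH} completes the proof.
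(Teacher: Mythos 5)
Your argument is correct and is exactly the routine verification the paper has in mind, where the proof is simply stated to follow from properties of matrices: the sample set transforms by the affine bijection $v\mapsto x^0+N(v-x^0)$ (so its cardinality is unchanged) and rank is preserved under multiplication by the invertible matrices $N,P_1,P_2$. Your explicit identification of $\s(\zero;S,\overline T)$ and the set identity $\s(\zero;NSP_1,N\overline TP_2)=N\bigl(\s(\zero;S,\overline T)\bigr)$ is a sound way to make that bookkeeping precise.
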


\begin{proof} The proof follows trivially from properties of matrices.\end{proof}

It follows that  $\s(x^0;S,\overline T)$ is a minimal poised set for the GSH at $x^0$ if and only if the set $\s(x^0;\beta SP_1, \beta\overline T P_2)$ is a minimal poised set for the GSH at $x^0$, where $\beta$ is a nonzero scalar and $P_1, P_2$ are permutaion matrices.
\begin{ex}
Let $x^0=(0,0)$. The $2^{\mbox{nd}}$-canonical minimal poised set for the GSH in $\R^2$ contains the points $(0,-1),(0,0),(0,1),(1,-1),(1,0)$ and $(2,-1)$.  In this case, $S = \{e^1, e^2\}$ and $\overline T=\{(e^1-e^2), -e^2\}$.  Figure \ref{fig:canset} illustrates this set.  

\begin{figure}[ht]
\caption{The $2^{\mbox{nd}}$-canonical minimal poised set for the GSH at $x^0$ in $\R^2$.} \label{fig:canset}
\includegraphics[scale=0.5]{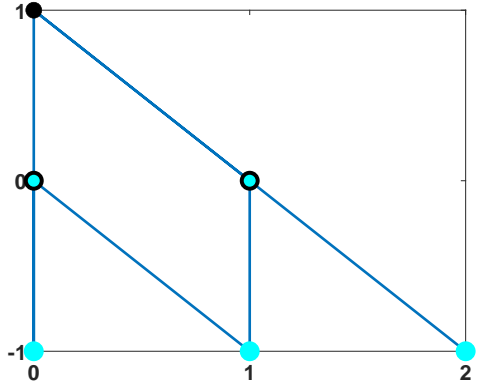}
\centering
\end{figure}

The points in $\{(0,0)\} \cup \{(0,0) \oplus S\} = \{0, e^1, e^2\}$ are represented with solid black borders.  These are the base points where simplex gradients will be computed.  The lines represent the vectors corresponding to $\overline T$ emanate from $\{(0,0)\} \cup \{(0,0) \oplus S\}$.  The points in $\left(\{(0,0)\} \cup \{(0,0) \oplus S\}\right) + \overline T$ are represented with cyan cores.  These are the points used to construct the simplex gradients.   Notice the points $(0,0)$ and $(1,0)$ have both black borders and cyan cores.  These are the common points that allow the number of function evaluations to be reduced to $(n+1)(n+2)/2=6$.
\end{ex}

We next demonstrate that every minimal poised set for the GSH of the form $\m(x^0;S,U_\ell)$ is poised for quadratic interpolation.  We then show that the converse is not true; it is possible to construct a set that is poised for quadratic interpolation, but does not take the form of $\m(x^0;S,U_\ell)$.  

\begin{prop} \label{prop:minimalsetispoisedforqi} Let $S \in \R^{n \times n}$ be full rank.  Select $\ell \in \{0, 1, \ldots, n\}$ and define $\T:=U_\ell$ as in Proposition \ref{prop:reducingfe}.  Then $\m(x^0;S,\T)$ is poised for quadratic interpolation.
\end{prop}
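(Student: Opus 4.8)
The plan is to show directly that the linear system \eqref{eq:poisedqisys} associated with the point set $\m(x^0;S,\T)$ has a unique solution, by exhibiting that the $(n+1)(n+2)/2$ points are in ``general quadratic position.'' First I would reduce to a convenient normalization: by Proposition \ref{prop:invmatrix}, applying an invertible linear map $N$ and permutation matrices to $S$ and $\T$ preserves both the minimal-poised-set property and the poised-for-quadratic-interpolation property (the latter because composing with an invertible affine map sends quadratics to quadratics bijectively). Taking $N=S^{-1}$ and $x^0=\zero$ reduces the claim to the case $S=\Id_n$, i.e.\ to showing that each $\ell^{\mathrm{th}}$-canonical minimal poised set $\s(\zero;\Id_n,E_\ell)$ is poised for quadratic interpolation.

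Next I would write down the explicit list of $(n+1)(n+2)/2$ points from \eqref{eq:listofpoints} (for $\ell\neq0$) and \eqref{eq:listofpointsu0} (for $\ell=0$) with $s^i=e^i$. For $\ell=0$ the set is $\{\zero\}\cup\{e^i\}\cup\{2e^i\}\cup\{e^i+e^j : i<j\}$. I would then verify that the monomial ``data matrix'' built from the basis $1, x_1,\dots,x_n, x_1^2,\dots,x_n^2, x_ix_j\,(i<j)$ evaluated at these points is nonsingular. The clean way is a triangularity argument: evaluating the quadratic $Q$ at $\zero$ pins down $\alpha_0$; the differences $Q(e^i)-Q(\zero)$ and $Q(2e^i)-2Q(e^i)+Q(\zero)$ isolate the linear coefficient $\alpha_i$ and the diagonal Hessian entry $\h_{i,i}$ respectively (a one-dimensional finite-difference fact); and then $Q(e^i+e^j)$ minus the already-determined pieces isolates the off-diagonal entry $\h_{i,j}$. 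This shows the only quadratic vanishing on all the points is the zero quadratic, which is equivalent to poisedness. For $\ell\neq0$ I would either run the analogous elimination on the list \eqref{eq:listofpoints}, or — more economically — observe that the set for $E_\ell$ is the image of the set for $E_0$ (or a closely related canonical set) under an invertible affine map fixing the structure, and invoke Proposition \ref{prop:invmatrix} again together with the affine-invariance of quadratic interpolation; a translation/reflection by $-e^\ell$ relates the two point lists.

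The main obstacle I expect is purely bookkeeping: confirming that the elimination genuinely triangularizes, i.e.\ that at each stage the ``new'' points introduce exactly the right new monomial and no spurious coupling, and handling the $\ell\neq 0$ list, whose points $\{-e^\ell,\ e^i-e^\ell,\ e^i+e^j-e^\ell\}$ are less symmetric than the $\ell=0$ case. I would manage this by exploiting the affine-reduction to the $\ell=0$ case wherever possible, so that the only genuine computation is the nonsingularity check for $\s(\zero;\Id_n,E_0)$, which follows from the three displayed finite-difference identities above. Finally, unwinding the normalization via Proposition \ref{prop:invmatrix} gives the statement for arbitrary full-rank $S$ and any $\ell$.
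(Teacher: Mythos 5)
Your proposal is correct, and its core mechanism is the same as the paper's: a triangular elimination showing that the only quadratic vanishing on the sample set is the zero quadratic. The organization differs in two ways worth noting. First, the paper does not pass to $S=\Id_n$; it absorbs $S$ directly into the unknowns via the substitution $\abar^\top=\alpha^\top S$, $\hhat=S^\top\h S$, which is the same normalization you achieve by invoking affine invariance of quadratic-interpolation poisedness (be aware that Proposition \ref{prop:invmatrix} itself only addresses the minimal-poised-set property, so the affine-invariance argument you supply is what actually carries the reduction). Second, the paper runs the detailed elimination on the harder case $\ell\neq 0$ (taking $\ell=n$ by permutation) and declares $\ell=0$ analogous, whereas you eliminate in the easy case $\ell=0$ and reduce $\ell\neq 0$ to it. Your ``more economical'' reduction does work and is cleaner than you may have realized: one can check from the point lists \eqref{eq:listofpoints} and \eqref{eq:listofpointsu0} that
\begin{equation*}
\m(x^0;S,U_\ell)=\m(x^0;S,U_0)\oplus\{-s^\ell\},
\end{equation*}
a pure translation (no reflection is needed), since translating $\{x^0\}\cup\{x^0+s^i\}\cup\{x^0+2s^i\}\cup\{x^0+s^i+s^j\}_{i<j}$ by $-s^\ell$ reproduces exactly the points $x^0,\ x^0+s^i,\ x^0-s^\ell,\ x^0+s^i-s^\ell,\ x^0+s^i+s^j-s^\ell$ ($i,j\neq\ell$). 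Combined with translation invariance of poisedness, this disposes of all $\ell\neq0$ at once, so your route replaces the paper's ``Case II is analogous'' by a genuine reduction and confines the computation to the symmetric $\ell=0$ configuration (the simplex-plus-edge-midpoints set), whose poisedness follows from the three finite-difference identities you display. The only gaps to fill in a final write-up are the verification of the displayed translation identity and the (easy) observation that $\m(x^0;S,U_\ell)\oplus\{-x^0\}$ is the image under the invertible map $S$ of $\m(\zero;\Id_n,E_\ell)$, so that the $S=\Id_n$ normalization is legitimate.
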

\begin{proof}  Noting that $\m(x^0;S,\T)$ is poised for quadratic interpolation if and only if the set  $\m(x^0;S,\T)\oplus -x^0$ is poised for quadratic interpolation, we assume without loss of generality that $x^0=\zero$.\smallskip\\
\noindent{\bf Case I: $\T=U_\ell$ for $\ell\in \{1, 2, \ldots, n\}$.}  Without loss of generality, by Proposition \ref{prop:invmatrix}, assume that $\ell=n$. 
The points contained in $\m(x^0;S,U_n)$ are
\begin{align*}
    &\{\zero\} \cup S \cup\{-s^n\}\cup \bigcup \limits_{i=1}^{n-1}\left\{s^{i}-s^n \right\} \cup  \bigcup \limits_{i=1}^{n-1} \bigcup \limits_{\substack {j \geq i \\ j \neq n}} \{s^i + s^j - s^n\}\\
    =&\{\zero\} \cup S \cup\{-s^n\}\cup \bigcup \limits_{i=1}^{n-1} \left\{s^{i}-s^n \right\} \cup \{2s^i-s^n\}_{i=1}^{n-1} \cup \bigcup \limits_{i=1}^{n-1} \bigcup \limits_{\substack{ j>i \\j \neq n}} \left\{s^{i}+{s^j}-s^n \right\}.
\end{align*}
We show that using this set, the only solution  to \eqref{eq:poisedqisys} is the trivial solution.  
Considering the point $\zero$, we obtain
\begin{align*}
    \az&=0.
\end{align*}
Considering the points $s^i$ for $i \in \{1, 2, \dots, n\}$ and noting that for all $i \in \{1, 2, \dots, n\},$ $s^i=Se^i$, we obtain
\begin{align}
    \abar^\top e^i+ \oneh (e^i)^\top \hhat e^i &=0, \label{eq:x0h}
\end{align}
where $\abar^\top=\al^\top S$ and $\hhat=S^\top \h S.$ Note that $\hhat$ is symmetric.  
Considering the points $s^i-s^n$ for $i \in \{ 1, 2, \dots, n-1\},$ we obtain
\begin{equation*}
     \abar^\top e^i -\abar^\top e^n -(e^i)^\top \hhat e^n + \oneh (e^i)^\top \hhat e^i + \oneh (e^n)^\top \hhat e^n=0.
\end{equation*}
Using \eqref{eq:x0h}, this simplifies to 
\begin{align}
  -\abar^\top e^n -(e^i)^\top \hhat e^n + \oneh (e^n)^\top \hhat e^n=0. \label{eq:x0eien}
\end{align}
Considering the point $-s^n,$ we find
\begin{align}
    -\abar^\top e^n + \oneh (e^n)^\top \hhat e^n&=0, \label{eq:aennHn}
\end{align}
which reduces \eqref{eq:x0eien} to 
\begin{align*}
  -(e^i)^\top \hhat e^n =0 \quad \mbox{for }~i \in \{1, 2, \ldots, n-1\}.
\end{align*}
Thus, $(e^i)^\top \hhat e^n=\hhat_{i,n}=\hhat_{n,i}=0$ for all $i \in \{1 ,2, \dots, n-1\}.$ Combining \eqref{eq:x0h} at $i=n$ and \eqref{eq:aennHn} multiplied by $-1$, we get
\begin{align*}
    \abar^\top e^n+ \oneh (e^n)^\top \hhat e^n  = 0 = \abar^\top e^n - \oneh (e^n)^\top \hhat e^n,
\end{align*}
which implies that $(e^n)^\top \hhat e^n= \hhat_{n,n}=0$. Considering the points $2s^i-s^n$ for $i \in \{1, 2, \dots, n-1\},$ we get
\begin{equation*}
    2 \abar^\top e^i - \abar^\top e^n +2 (e^i)^\top \hhat e^i + \oneh (e^n)^\top \hhat e^n=0.
\end{equation*}
Using \eqref{eq:aennHn}, this simplifies to
\begin{align*}
    2 \abar^\top e^i+ 2 (e^i)^\top \hhat e^i=0.
\end{align*}
By multiplying \eqref{eq:x0h} by 2 and substituting in the above equation, we get $(e^i)^\top \hhat e^i=\hhat_{i,i}=0$ for all $i \in \{1, 2, \dots, n-1\}.$  This now implies $\abar_i = \abar^\top e^i=0$ for all $i \in \{1, 2, \ldots, n\}$, i.e., $\abar=\zero$. Lastly, consider the points $s^i+s^j-s^n$ for $i \neq j, i,j \in \{ 1,2, \dots, n-1\}.$ Since $\hhat_{i,i}=0$ for $i \in \{1, 2, \dots, n\}$,  $\hhat_{i,n}=\hhat_{n,i}=0,$ for $i \in \{1, 2, \dots, n-1\}$ and  $\abar=\zero$, we obtain
\begin{equation*}
    (e^i)^\top \hhat e^j=0.
\end{equation*}
Thus $\hhat=\zero_{n \times n}$. Therefore, the only solution to \eqref{eq:poisedqisys}  is the trivial solution.\smallskip\\

\noindent{\bf Case II: $\T=U_0.$}  The proof for this case is analogous to that of Case I. 
\end{proof}

It follows from the previous proposition that when a minimal poised set for the GSH is used,  the GSH  is equal to the simplex Hessian as described in \cite[Section 9.5]{conn2009introduction} (see also \cite[Section 3]{custodio2007}). Therefore, the results obtained  in \cite{custodio2007,conn2009introduction}  related to the simplex Hessian applies to the GSH. In this case, the GSH (and the simplex Hessian) is equal to the Hessian of the quadratic interpolation model passing through the sample points $\m(x^0;S,U_\ell).$ Therefore, the results developed in \cite{conn2009introduction,conn2008bgeometry,custodio2007} related to the simplex Hessian and the Hessian of a quadratic interpolation model are valid for the GSH.  One of the advantage of computing the GSH compared to the computation of the Hessian of the quadratic interpolation model has been discussed in Remark  \ref{rem:remark}. Another advantage of the GSH compared to the simplex Hessian described in \cite{conn2009introduction,custodio2007} is that it provides a simple explicit formula  that is well-defined as long as the matrices of directions employed are nonempty. Hence, the formula for the GSH  provides an approximation of the full Hessian, or a partial Hessian, for all possible cases defined in Definition \ref{def:cases}. 

Next, we provide an example that serves to show that a set of $(n+1)(n+2)/2$ distinct points in $\R^n$ that is poised for quadratic interpolation is not necessarily a minimal poised set for the GSH.

\begin{ex}\label{ex:nonminGSH}
Let $x^0=\begin{bmatrix} 0&0 \end{bmatrix}^\top$ be the point of interest. Consider $\mathcal{X}=$ $\{x^0=\zero,$ $ e^1,$ $ e^2,$ $ -e^1,$ $ -e^2,$ $ -e^1-e^2\}.$ Then $\mathcal{X}$ is poised for quadratic interpolation, but cannot be expressed as a minimal poised set for the GSH at $x^0$.
\end{ex}

\begin{proof}Using a similar approach as in  Proposition \ref{prop:minimalsetispoisedforqi}, one can verify that $\X$ is poised for quadratic interpolation. Now we show that $\X$ is not a minimal poised set for the GSH at $x^0$ using brute force. We need to build $S=\{ s^1, s^2\}$ such that the matrix corresponding to $S$ is full rank and $x^0 \oplus S \subseteq \X$. Hence, the possible choices for $S$ are 
\begin{align*}
    S &\in \left \{ \{e^1,e^2\},  \{e^1,-e^2\}, \{e^1,-e^1-e^2\}, \{-e^1,e^2 \}, \right . \notag\\ &\quad\quad\left . \{-e^1-e^2, e^2 \}, \{-e^1,-e^2 \}, \{ -e^1,-e^1-e^2 \},  \{-e^1-e^2, -e^2\}  \right \}.
\end{align*}
{\bf Case I: $S=\{e^1,e^2\}.$} In this case, we need to build $\overline T=\{t^1,t^2\}$ such that the matrix corresponding to $\overline T$ is full rank and 
    $$\{t^1, e^1+t^1,  e^2+t^1, t^2, e^1+t^2,  e^2+t^2\} = \X.$$
We see that the only possible choice of $t^1$ such that $\{t^1, e^1+t^1, e^2+t^1\} \subseteq \X $ is $t^1=-e^1-e^2$ (note $t^1 \neq \zero$ as we require full rank). However, the only possible choice of $t^2$ such that $\{t^2, e^1+t^2, e^2+t^2\} \subseteq \X $ is $t^2=-e^1-e^2$. As full rank implies $t^1$ cannot equal $t^2$, we see $S=\{e^1,e^2\}$ cannot provide the desired properties.
\smallskip\\
\noindent{\bf Cases II through VIII:} The other options for $S$ can be eliminated analogously.
\\
Therefore, $\X$ cannot be expressed as a minimal poised set for the GSH at $x^0$.
\end{proof}

Figure \ref{fig:notminset} shows all possible directions connecting two points in the set $\X$ from Example \ref{ex:nonminGSH}. If $\X$ were a minimal poised set for the GSH at $x^0=\zero$, then it would be possible to choose two directions (lines in blue) emerging from $x^0$ that connect to other points in $\Y$ and these same two directions would be emerging from two other points.
\begin{figure}[ht]
\caption{\centering A set that is poised for quadratic interpolation but not a minimal poised set for the GSH at $x^0$.}\label{fig:notminset}
\includegraphics[scale=0.5]{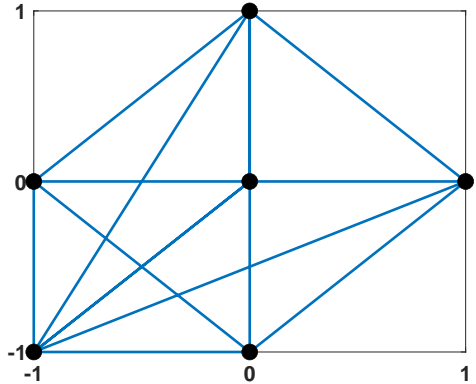}
\centering
\end{figure}

We conclude this section by providing formulae to obtain all the values of the coefficients involved in the quadratic interpolation function of $f$ over a minimal poised set for the GSH of the form $\m(x^0;S,U_\ell)$, which we denote by  $Q_f(x^0;S,U_\ell)(x).$

\begin{prop} Let $f:\dom f \subseteq \R^n \to \R$.  Let $\m(x^0;S,U_\ell) \subset \dom f$ be a minimal poised set for the GSH at $x^0$ as constructed in Proposition \ref{prop:reducingfe}. If $\ell \in\{1, 2, \dots, n\},$ then the Hessian matrix $\h$ of  the quadratic interpolation function $Q_f(x^0;S,U_\ell)(x)$    is given by $\h=S^{-\top}\hhat S^{-1},$ where the entries of the symmetric matrix $\hhat \in \R^{n \times n}$ are 
\begin{align*}
    \hhat_{i,\ell}&=-f(x^0+s^i-s^\ell)+f(x^0+s^i)+f(x^0-s^\ell)-f(x^0), \quad i \in \{1, 2, \dots, n\}\setminus\{\ell\},\\
    \hhat_{i,i}&=f(x^0+2s^i-s^\ell)-2f(x^0+s^i-s^\ell)+f(x^0-s^\ell), \quad i \in \{1, 2, \dots, n\}\setminus\{\ell\},\\
     \hhat_{\ell,\ell}&=f(x^0+s^\ell)+f(x^0-s^\ell)-2f(x^0),\\
    \hhat_{i,j}&=f(x^0+s^i+s^j-s^\ell)-f(x^0+s^i-s^\ell)-f(x^0+s^j-s^\ell)+f(x^0-s^\ell),
\end{align*}
for all $i,j \in \{1, 2, \dots, n\}\setminus\{\ell\}, i \neq j.$ If $\ell=0,$ then
\begin{align*}
    \hhat_{i,i}&=f(x^0+2s^i)-2f(x^0+s^i)+f(x^0), \quad i \in \{1, 2, \dots, n\},\\
    \hhat_{i,j}&=f(x^0+s^i+s^j)-f(x^0+s^i)-f(x^0+s^j)+f(x^0), \quad i,j \in \{1, 2, \dots, n\}, i \neq j.
\end{align*}
For all $\ell \in \{0, 1, \dots, n\},$ the vector $\alpha$ associated to $Q_f(x^0;S,U_\ell)(x)$ is given by  $\alpha=S^{-\top}\abar$, where  the entries of $\abar \in \R^n$ are 
\begin{align*}
    \abar_i&=f(x^0+s^i)-f(x^0)-\frac{1}{2}\hhat_{i,i}-(x^0)^\top\h s^i, \quad i \in \{1, 2,\dots, n\}.
\end{align*}
 The scalar $\alpha_0$ of $Q_f(x^0;S,U_\ell)(x)$ is 
\begin{align*}
    \alpha_0 &=f(x^0)-\alpha^\top x^0-\frac{1}{2}(x^0)^\top \h x^0.
\end{align*}
\end{prop}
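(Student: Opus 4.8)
The plan is to extract the quadratic interpolation coefficients directly from the relationship between the GSH and quadratic interpolation established in Proposition~\ref{prop:minimalsetispoisedforqi}, together with the explicit formula for the entries of $\hhat$ that appears in the proof of that proposition. Recall from Proposition~\ref{prop:minimalsetispoisedforqi} that when $\m(x^0;S,U_\ell)$ is used, the GSH coincides with the Hessian of the quadratic interpolation function $Q_f(x^0;S,U_\ell)$. Working without loss of generality with $x^0=\zero$ first (as in that proof), one writes the interpolation conditions $\alpha_0+\alpha^\top y+\tfrac12 y^\top \h y=f(y)$ at each point $y\in\m(x^0;S,U_\ell)$, makes the substitutions $\abar=S^\top\alpha$ and $\hhat=S^\top\h S$, and then solves the resulting linear system entry by entry, exactly mimicking the case analysis in the proof of Proposition~\ref{prop:minimalsetispoisedforqi}.

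First I would handle the Hessian entries. For $\ell\in\{1,\dots,n\}$: evaluating the interpolation conditions at $\zero$ gives $\alpha_0=f(x^0)$; at $s^\ell$ and $-s^\ell$ and subtracting/adding gives $\hhat_{\ell,\ell}=f(x^0+s^\ell)+f(x^0-s^\ell)-2f(x^0)$; at $s^i-s^\ell$ combined with the equations at $s^i$, $s^\ell$ and $\zero$ one isolates $\hhat_{i,\ell}$; at $2s^i-s^\ell$ one isolates $\hhat_{i,i}$; and at $s^i+s^j-s^\ell$ one isolates $\hhat_{i,j}$. These are precisely the combinations that appeared (with right-hand side zero) in the proof of Proposition~\ref{prop:minimalsetispoisedforqi}, now carrying the true function values $f(\cdot)$ on the right. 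The case $\ell=0$ is the analogous (simpler) computation using the points $\zero$, $s^i$, $2s^i$, $s^i+s^j$. Having $\hhat$, the matrix $\h$ is recovered from $\hhat=S^\top\h S$ as $\h=S^{-\top}\hhat S^{-1}$, using that $S$ is full rank.

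Next I would recover $\alpha$ and $\alpha_0$ for general $x^0$. The cleanest route is to note that the interpolation conditions are invariant under translation: $\m(x^0;S,U_\ell)$ is poised for quadratic interpolation iff its translate by $-x^0$ is, and the Hessian $\h$ is unchanged. So the formula for $\h$ above holds verbatim for general $x^0$ with the stated shifted function values. For $\alpha$, evaluate the (general-$x^0$) interpolation condition at the point $x^0+s^i$ and at $x^0$, subtract to kill $\alpha_0$, and solve for the $s^i$-component: $f(x^0+s^i)-f(x^0)=\alpha^\top s^i+\tfrac12 (x^0+s^i)^\top\h(x^0+s^i)-\tfrac12(x^0)^\top\h x^0 = \alpha^\top s^i + (x^0)^\top\h s^i + \tfrac12 (s^i)^\top\h s^i$, and since $(s^i)^\top\h s^i=\hhat_{i,i}$ this rearranges to $(S^\top\alpha)_i=\abar_i=f(x^0+s^i)-f(x^0)-\tfrac12\hhat_{i,i}-(x^0)^\top\h s^i$, hence $\alpha=S^{-\top}\abar$. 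Finally, evaluating the interpolation condition at the single point $x^0$ yields $\alpha_0=f(x^0)-\alpha^\top x^0-\tfrac12(x^0)^\top\h x^0$.

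I do not expect a serious obstacle here; the only point requiring care is bookkeeping in the $\ell\in\{1,\dots,n\}$ case, namely verifying that the listed linear combinations of the interpolation equations indeed isolate each $\hhat$-entry and that $\hhat$ is symmetric (which follows since $\h=\h^\top$ forces $\hhat=S^\top\h S$ to be symmetric). The mild subtlety is the role of the cross term $(x^0)^\top\h s^i$ in $\abar_i$: it is unavoidable because the directions are anchored at $x^0$ rather than at the origin, so $\alpha$ genuinely depends on $x^0$ through $\h$; one must simply track it through the subtraction $f(x^0+s^i)-f(x^0)$. Everything else is a direct translation of the computations already carried out in the proof of Proposition~\ref{prop:minimalsetispoisedforqi}.
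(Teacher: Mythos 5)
Your proposal is correct and follows essentially the same route as the paper: evaluate the interpolation conditions at the points of $\m(x^0;S,U_\ell)$, substitute $\abar=S^\top\alpha$ and $\hhat=S^\top\h S$, and solve the system entry by entry, recovering $\h=S^{-\top}\hhat S^{-1}$, then $\abar_i$ and $\alpha_0$. The only cosmetic difference is that you reduce the Hessian computation to $x^0=\zero$ by translation invariance, whereas the paper carries general $x^0$ throughout (via the auxiliary matrix $\hba=\h S$); both yield the same eliminations and the same formulas.
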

\begin{proof}
The result is obtained by using Definition \ref{def:quadinterpolationfunc}. Let
\begin{align*}
    Q_f(x^0;S,U_\ell)(x)=\alpha_0+\alpha^\top x+\frac{1}{2}x^\top \h x,
\end{align*}
where $a_0 \in \R, \alpha \in \R^n$ and $\h=\h^\top \in \R^{n \times n}.$ Suppose $\ell \in \{1, 2, \dots, n\}.$
Evaluating $Q_f(x^0;S,U_\ell)(x)$ at $x^0$, we obtain
\begin{align}
    \alpha_0+\alpha^\top x^0+\frac{1}{2}(x^0)^\top\h x^0&=f(x^0). \label{eq:x0f}
\end{align}
Evaluating $Q_f(x^0;S,U_\ell)(x)$ at $x^0+s^i$ and using \eqref{eq:x0f}, we obtain
\begin{align}
    f(x^0)+\abar^\top e^i+(x^0)^\top \hba e^i+\frac{1}{2}\hhat_{i,i}&=f(x^0+s^i), \quad i \in \{1, \dots, n\}, \label{eq:x0psi}
\end{align}
where $\abar^\top=\alpha^\top S, \hba=\h S$ and $\hhat=S^\top \h S.$ Evaluating $Q_f(x^0;S,U_\ell)(x)$ at $x^0+s^i-s^\ell$ and using \eqref{eq:x0f} and \eqref{eq:x0psi}, we obtain
\begin{equation}
   \scalemath{1}{ f(x^0+s^i)-\abar^\top s^\ell-(x^0)^\top \hba e^\ell-\hhat_{i,\ell}+\frac{1}{2}\hhat_{\ell,\ell}=f(x^0+s^i-s^\ell),  i \in \{1, \dots, n\}\setminus \{\ell\}.}\label{eq:x0simsk}
\end{equation}
Evaluating $Q_f(x^0;S,U_\ell)(x)$ at $x^0-s^\ell$ and using \eqref{eq:x0f}, we find
\begin{align}
    -\abar^\top e^\ell-(x^0)^\top \hba e^\ell+ \frac{1}{2} \hhat_{\ell,\ell}&=f(x^0-s^\ell)-f(x^0). \label{eq:x0msk}
\end{align}
Substituting \eqref{eq:x0msk} into \eqref{eq:x0simsk}, we obtain
\begin{equation}
    \hhat_{i,\ell}=\hhat_{\ell,i}=-f(x^0+s^i-s^\ell)+f(x^0+s^i)+f(x^0-s^\ell)-f(x^0),  i \in \{1, \dots, n\}\setminus \{\ell\}. \label{eq:hhatik}
\end{equation}
Evaluating $Q_f(x^0;S,U_\ell)(x)$ at $x^0+2s^i-s^\ell$ and using \eqref{eq:x0f}, \eqref{eq:x0psi} and \eqref{eq:x0msk}, we find
\begin{align}
     \scalemath{0.9}{2f(x^0+s^i)-2f(x^0)+f(x^0-s^\ell)+\hhat_{i,i}-2\hhat_{i,\ell}=f(x^0+2s^i-s^\ell) \label{eq:x0p2simsk}, i \in \{1, \dots, n\}\setminus\{\ell\}.}
\end{align}
Substituting \eqref{eq:hhatik} in \eqref{eq:x0p2simsk}, we find
\begin{align*}
    \hhat_{i,i}&=f(x^0+2s^i-s^\ell)+f(x^0-s^\ell)-2f(x^0+s^i-s^\ell), \quad i \in \{1, 2, \dots, n\}\setminus\{\ell\}.
\end{align*}
Evaluating $Q_f(x^0;S,U_\ell)(x)$ at $x^0+s^i+s^j-s^\ell$ and using \eqref{eq:x0f}, \eqref{eq:x0psi} and \eqref{eq:x0msk}, we find
\begin{align*}
    \hhat_{i,j}=\hhat_{j,i}&=f(x^0+s^i+s^j-s^\ell)-f(x^0+s^i-s^\ell)-f(x^0+s^j-s^\ell)+f(x^0-s^\ell), 
\end{align*}
for $i,j \in \{1, 2, \dots, n\}\setminus\{\ell\}, i \neq j.$ Rearranging \eqref{eq:x0msk}, we get
\begin{align}
    \frac{1}{2} \hhat_{\ell,\ell}-f(x^0-s^\ell)+f(x^0)&=\abar^\top e^\ell+(x^0)^\top\hba e^\ell \label{eq:rearrangex0msk}.
\end{align}
Substituting \eqref{eq:rearrangex0msk} into \eqref{eq:x0psi} for $i=\ell$, we obtain
\begin{align*}
    \hhat_{\ell,\ell}&=f(x^0+s^\ell)+f(x^0-s^\ell)-2f(x^0).
\end{align*}
The entries of the vector $\abar$ are found by isolating $\abar^\top e^i$ in \eqref{eq:x0psi}. We obtain
\begin{align*}
    \abar_i&=f(x^0+s^i)-f(x^0)-\frac{1}{2} \hhat_{i,i}-(x^0)^\top\h s^i, \quad i \in \{1, 2, \dots, n\} 
\end{align*}
where $\alpha_i=S^{-\top}\abar_i.$ Lastly, the scalar $\alpha_0$ is obtained from \eqref{eq:x0f}. We find
\begin{align*}
    \alpha_0&=f(x^0)-\alpha^\top x^0-\frac{1}{2}(x^0)^\top \h x^0.
\end{align*}
If $\ell=0$, a similar process can be applied to obtain $\h, \alpha$ and $\alpha_0.$ \qedhere 
\end{proof}

Using $\T=U_\ell$, it is worth emphasizing that $Q_f(x^0;S,\T)(x)$ can be obtained for free in terms of function evaluations whenever  $\nabla^2_s f(x^0;S,\T)$  has already been computed. Indeed, all the coefficients of $Q_f(x^0;S,\T)$ are computed using the same function evaluations used for finding $\nabla_s^2 f(x^0;S,\T).$ Since $\mathcal{M}(x^0;S,\T)$ is poised for quadratic interpolation, there exists one and only one Hessian matrix $\h$. It follows that the Hessian $\h$ of the quadratic interpolation function $Q_f(x^0;S,\T)(x)$ must be equal to  $\nabla_s^2 f(x^0;S,\T).$ 

\section{Conclusion}\label{sec:conc}

We have presented the generalized simplex Hessian (GSH), a compact and simple formula for approximating the Hessian of a function $f$.  The GSH is well-defined as long as the matrices $S$ and $T_i$ are nonempty. In particular, the matrices $S$ and $T_i$ do not need to be full row rank nor square to compute the GSH.  We have also presented a centered version of the GSH called the generalized centered simplex Hessian (GCSH) and shown its relation to the GSH (Section \ref{sec:relation}).

In Theorems \ref{thm:mainprop} and \ref{thm:ebcentered2}, we developed error bounds for the GSH and GCSH.  Under some assumptions , the GSH is an order-1 accurate approximation of the partial Hessian $\Projuivi \nabla^2 f(x^0)$, and the GCSH is an order-2 accurate approximation of the partial Hessian $\Projuivi \nabla^2 f(x^0)$.  Indeed,  when all matrices of directions in the set $\Ti$ are equal, then we obtain error bounds that covers all possible  16 cases defined in Definition \ref{def:cases}.  When all matrices $T_i$ are not equal, then 12 out of 16 cases are covered.  Future research directions include an investigation of the cases not covered in Theorems \ref{thm:mainprop} and \ref{thm:ebcentered2}: the four cases $S$-nondetermined \textbackslash overdetermined and $\Ti$-nondetermined\textbackslash underdetermined.  

We note that when a matrix is nondetermined or overdetermined, it is possible to remove columns (or rows) of the matrix to make it full rank.  This may lead to error bounds for the nondetermined cases. 

As the results in this paper are reported in terms of order-$N$ accuracy, it is possible to apply the theory of DFO calculus developed in \cite{chen2022error} directly.  In \cite{chen2022error}, approximation based product, quotient, and chain rules are developed.  For each rule, it is shown that when an objective function can be decomposed into a product, quotient, or composition of simpler functions, then it is possible to construct approximate Hessians of the simpler functions and then use those to construct approximation Hessians of the objective function. Combining \cite{chen2022error} with the research herein has lead to a model-based trust region algorithm using the calculus-based approximation techniques \cite{Hare2023trdfa}.

In Section \ref{sec:qishc}, we showed that if a minimal poised set for the GSH is used, the number of  distinct function evaluations required to compute the GSH is $(n+1)(n+2)/2.$ We investigated the relationship between a minimal poised set for the GSH and poisedness for quadratic interpolation, proving that a minimal poised set for the GSH of the form $\m(x^0;S,U_\ell)$ is well-poised for quadratic interpolation, but that the converse does not necessarily hold. In this case, we know that $\sh f(x^0;S,\T)$ with $\T=U_\ell$ is equal to  the Hessian of the quadratic interpolation function $Q_f (x^0;S,U_\ell)$. We also developed explicit formulae for obtaining the parameters of the quadratic interpolation function of $f$ over a minimal poised set for the GSH of the form $\m(x^0;S,U_\ell)$. 

Future research should explore if the sets $U_0, U_1, \dots, U_n$ as defined in Proposition \ref{prop:reducingfe} are the only possible choices for $\T \in \R^{n \times n}$ such that $\s(x^0;S,\T),$ where $S \in \R^{n \times n}$ is full rank, is a minimal poised set for GSH at $x^0$.  It can be proven that it is indeed the case by using brute force in $\R$ and $\R^2,$ but it is still unclear how to generalized this claim in an arbitrary dimension $n$.

While it is immediately clear that a minimal poised set for GSH can be used to construct a `small' set for the GCSH (using Proposition \ref{prop:relationGCSHGSH}), it is not clear if such a set would be minimal.  Future research will investigate the properties of  a minimal poised set for the GCSH and the matrices of directions that can be used to minimize the number of sample points.

 It is known that the diagonal entries of the Hessian may be obtained for free in terms of function evaluations when a centered simplex gradient is computed. Hence, a future research direction could be  to verify if the main  diagonal entries of $\nabla^3 f(x^0)$ may be obtained for free  when a GCSH is computed.


On a final note, MATLAB implementations of the GSH and the GCSH are available upon request. 

\section*{Acknowledgements}
The authors express their deep gratitude to the anonymous referees for the comprehensive reviews that improved the quality of this work.

Hare's research is partially funded by the Natural Sciences and Engineering Research Council (NSERC) of Canada, Discover Grant \#2018-03865.  Jarry-Bolduc's research is  partially funded by the Natural Sciences and Engineering Research Council (NSERC) of Canada, Discover Grant \#2018-03865. Jarry-Bolduc would like to acknowledge UBC for the funding received through the University Graduate Fellowship award. 

\bibliographystyle{plain}
\bibliography{Bibliography}
\end{document}